\let\ams@starttoc\@starttoc
\let\@starttoc\ams@starttoc
\patchcmd{\@starttoc}{\makeatletter}{\makeatletter\parskip\z@}{}{}
\tikzstyle{none}=[inner sep=0mm]
\newcommand*{\QEDA}{\hfill\ensuremath{\square}}
\newcommand{\mr}{\mathrm}
\newcommand{\ms}{\mathscr}
\newcommand{\p}{\partial}
\newcommand{\SB}{B_A^{sym}}
\newcommand{\ul}{\underline}
\newcommand{\ol}{\overline}
\newcommand{\xra}{\xrightarrow}
\newcommand{\G}{\Gamma}
\newcommand{\DS}{\Delta S}
\newcommand{\EDS}{\mathrm{Epi}\Delta S}
\newcommand{\SBI}{B_I^{sym}}
\newcommand{\ve}{\varepsilon}
\renewcommand{\phi}{\varphi}
\newcommand{\fas}{\mathcal{F}(as)}
\newcommand{\llb}{\left\lbrace}
\newcommand{\rrb}{\right\rbrace}
\newcommand{\llv}{\left\lvert}
\newcommand{\rrv}{\right\rvert}
\newcommand{\lodam}{\ms{L}(A,M)}
\newcommand{\llangle}{\left\langle}
\newcommand{\rrangle}{\right\rangle}
\newcommand{\RNum}[1]{\MakeUppercase{\romannumeral #1}}
\renewcommand{\phi}{\varphi}
\newcommand{\CM}{\mathbf{CMod}}
\newcommand{\MC}{\mathbf{ModC}}
\newcommand{\kmod}{\mathbf{kMod}}
\newcommand{\C}{\mathbf{C}}
\newtheorem{thm}{Theorem}[section]
\theoremstyle{plain}
\newtheorem{prop}[thm]{Proposition}
\newtheorem{lem}[thm]{Lemma}
\newtheorem{cor}[thm]{Corollary}
\theoremstyle{definition}
\newtheorem{defn}[thm]{Definition}
\tikzset{commutative diagrams/.cd,arrow style=tikz,diagrams={>=latex'}}
\theoremstyle{remark}
\newtheorem{rem}[thm]{Remark}
\title{A Comparison Map for Symmetric Homology and Gamma Homology}
\author{Daniel Graves}
\address{School of Mathematics and Statistics, University of Sheffield, Sheffield, S3 7RH, UK}
\date{}
\begin{document}

\keywords{Gamma homology, Symmetric homology, Harrison homology, Functor homology}
\subjclass[2010]{55N35, 13D03, 55U15}

\maketitle

\begin{abstract}
We construct a comparison map between the gamma homology theory of Robinson and Whitehouse and the symmetric homology theory of Fiedorowicz and Ault in the case of an augmented, commutative algebra over a unital commutative ground ring.
\end{abstract}

\section*{Introduction}
The gamma homology theory of Robinson and Whitehouse \cite{Whi94} and the symmetric homology theory of Fiedorowicz, \cite{Fie} and \cite{Ault}, are both constructed by building a symmetric group action into Hochschild homology, albeit in very different ways. In this paper we provide a comparison map for the two theories in the case of an augmented, commutative algebra.

Robinson and Whitehouse developed \emph{gamma homology}, which we will frequently write as $\G$-homology, for commutative algebras to encode information about homotopy commutativity. $\G$-homology is closely related to stable homotopy theory as demonstrated by Pirashvili \cite{PirH} and Pirashvili and Richter \cite{Pir00}. When the ground ring contains $\mathbb{Q}$, $\G$-homology coincides with Harrison homology up to a shift in degree. We will define a normalized version of Harrison homology for an augmented, commutative algebra.

The \emph{symmetric homology} theory for associative algebras was first introduced by Fiedorowicz \cite{Fie} and was extensively developed by Ault, \cite{Ault} and \cite{Ault-HO}. It is the homology theory associated to the symmetric crossed simplicial group \cite{FL}. Symmetric homology also has connections to stable homotopy theory. For instance, the symmetric homology of a group algebra is isomorphic to the homology of the loop space on the infinite loop space associated to the classifying space of the group \cite[Corollary 40]{Ault}.

Defining a comparison map between the two homology theories is not straightforward in general. Our results utilize the fact in the case of an augmented, commutative algebra there exist smaller chain complexes than the standard complexes used to compute each homology theory.

The paper is arranged as follows. Section \ref{background-section} collates some prerequisites on symmetric groups, augmented algebras and chain complexes associated to simplicial $k$-modules. We recall the Hochschild complex, the shuffle subcomplex and the Eulerian idempotents as these will play an important role in defining normalized Harrison homology. Section \ref{FH-section} recalls some constructions of functor homology. Symmetric homology is defined in terms of functor homology and $\G$-homology can be similarly defined in this setting. In particular, we recall the notion of $\mr{Tor}$ functors for a small category and a chain complex that computes them.

We begin Section \ref{Harr-section} by recalling the definition of \emph{Harrison homology} for commutative algebras. We construct a normalized version of Harrison homology for augmented, commutative algebras over a ground ring containing $\mathbb{Q}$. In particular, we prove that under these conditions there is a smaller chain complex, constructed from the augmentation ideal, that computes Harrison homology.

In Section \ref{gamma-section} we recall the defintion of $\G$-homology for a commutative algebra. We recall the \emph{Robinson-Whitehouse complex}, whose homology is $\G$-homology. For an augmented, commutative algebra we prove that the Robinson-Whitehouse complex splits using a construction called the \emph{pruning map}. One summand of this splitting is constructed analogously to the Robinson-Whitehouse complex using the augmentation ideal. We prove that for a flat algebra over a ground ring containing $\mathbb{Q}$, the homology of this summand is isomorphic to the $\G$-homology of the algebra. 

Section \ref{symm-hom-section} recalls the definition of symmetric homology. We recall Ault's chain complex for computing the symmetric homology of an augmented algebra.

In Section \ref{CM-section} we construct a surjective map of chain complexes which gives rise to a long exact sequence connecting the symmetric homology of an augmented commutative algebra with a direct summand of its gamma homology. We use the material of Sections \ref{Harr-section} and \ref{gamma-section} to prove that when the algebra is flat over a ground ring containing $\mathbb{Q}$ we have a long exact sequence connecting the symmetric homology of the algebra with its gamma homology.

The results in this paper first appeared in the author's thesis \cite{DMG}. We note that the statements of Theorem \ref{gamma-iso-thm} and Theorem \ref{long-exact-thm} in this paper have an extra flatness condition that was accidentally omitted in \cite[Theorem 32.2.1]{DMG} and \cite[Theorem 32.2.2]{DMG}. 

Throughout the paper, $k$ will denote a unital commutative ring.

\section{Background}
\label{background-section}
We collect prerequisites for the remainder of the paper. We will require facts about the symmetric groups for each of Harrison homology, $\G$-homology and symmetric homology. We recall some facts about augmented algebras that will be required for our results. We briefly recall the construction of the Hochschild complex, the shuffle subcomplex and the Eulerian idempotents, all of which can be found in \cite{Lod}, as these will be essential in defining a normalized Harrison homology in Section \ref{Harr-section}.

\subsection{Symmetric groups and shuffles}

\begin{defn}
The symmetric group $\Sigma_{n}$ on $\ul{n}=\lbrace 1,\dotsc ,n\rbrace$ for $n\geqslant 1$ is generated by the transpositions $\theta_i= (i\,\, i+1)$ for $1\leqslant i\leqslant n-1$ such that each transposition squares to the identity, disjoint transpositions commute and $\left(\theta_i\circ\theta_{i+1}\right)^3=id_{\ul{n}}$ for $1\leqslant i\leqslant n-2$.
\end{defn}

\begin{defn}
For $1\leqslant i\leqslant n-1$, a permutation $\sigma \in \Sigma_n$ is called an $i$-\emph{shuffle} if
\[ \sigma(1)<\sigma(2)<\cdots < \sigma(i) \quad \text{and} \quad \sigma(i+1)<\sigma(i+2)<\cdots <\sigma(n).\]
\end{defn}

\begin{defn}
We define an element $sh_{i,n-i}$ in the group algebra $k[\Sigma_n]$ by
\[sh_{i,n-i}=\sum_{\substack{i-\text{shuffles}\\ \sigma\in \Sigma_n}} sgn(\sigma)\sigma.\]
That is, we take the signed sum over all the $i$-shuffles in $\Sigma_n$.
\end{defn}

\begin{defn}
\label{tot-shuff}
We define the \emph{total shuffle operator}, $sh_n$, in $k[\Sigma_n]$ to be the sum of the elements $sh_{i,n-i}$. That is,
\[sh_n=\sum_{i=1}^{n-1} sh_{i,n-i}.\]
\end{defn}

\subsection{Augmented algebras}
\begin{defn}
A $k$-algebra $A$ is said to be \emph{augmented} if it is equipped with a $k$-algebra homomorphism $\varepsilon\colon A\rightarrow k$. We define the \emph{augmentation ideal} of $A$ to be $\mathrm{Ker} (\ve)$ and we denote it by $I$.
\end{defn}

Recall from \cite[Section 1.1.1]{LodVal} that for an augmented $k$-algebra $A$ there is an isomorphism of $k$-modules $A\cong I\oplus k$. It follows that every element $a\in A$ can be written uniquely in the form $y+\lambda$ where $y\in I$ and $\lambda \in k$.

\begin{defn}
\label{basic-tens-defn}
Let $A$ be an augmented $k$-algebra with augmentation ideal $I$. A \emph{basic tensor} in $A^{\otimes n}$ is an elementary tensor $a_1\otimes \cdots \otimes a_n$ such that either $a_i\in I$ or $a_i=1_k$ for each $1\leqslant i\leqslant n$. A tensor factor $a_i$ is called \emph{trivial} if $a_i=1_k$ and is called \emph{non-trivial} if $a_i\in I$.
\end{defn}
One notes that the $k$-module $A^{\otimes n}$ is generated $k$-linearly by all basic tensors.

\begin{defn}
A \emph{bimodule} over an associative $k$-algebra $A$ is a $k$-module $M$ with a $k$-linear action of $A$ on the left and right satisfying $(am)a^{\prime}=a(ma^{\prime})$ where $a$, $a^{\prime}\in A$ and $m\in M$. An $A$-bimodule $M$ is said to be \emph{symmetric} if $am=ma$ for all $a\in A$ and $m\in M$.
\end{defn}

One notes that for an augmented $k$-algebra $A$, $k$ is an $A$-bimodule with the structure maps determined by the augmentation.

\subsection{Hochschild homology}
\begin{defn}
\label{complexes}
Let $X_{\star}$ be a simplicial $k$-module. The \emph{associated chain complex}, denoted $C_{\star}(X)$ is defined to have the $k$-module $X_n$ in degree $n$ with the boundary map defined to be the alternating sum of the face maps of $X_{\star}$.

The \emph{degenerate subcomplex}, denoted $D_{\star}(X)$, is defined to be the subcomplex of $C_{\star}(X)$ generated by the degeneracy maps of $X_{\star}$.

The \emph{normalized chain complex}, denoted $N_{\star}(X)$, is defined to be the quotient of $C_{\star}(X)$ by the subcomplex $D_{\star}(X)$.
\end{defn}

\begin{rem}
The normalized chain complex is isomorphic to the Moore subcomplex of $C_{\star}(X)$ \cite[Definition 8.3.6]{weib}.
\end{rem}

\begin{defn}
\label{HH-simp-defn}
Let $A$ be an associative $k$-algebra and let $M$ be an $A$-bimodule. The \emph{Hochschild complex}, denoted $C_{\star}(A,M)$, is the associated chain complex of the simplicial $k$-module with $C_n(A,M)= M\otimes A^{\otimes n}$, the $k$-module generated $k$-linearly by all elementary tensors $(m\otimes a_1\otimes\cdots \otimes a_n)$. The face maps $\p_i\colon C_n(A,M)\rightarrow C_{n-1}(A,M)$ are determined by
\begin{itemize}
\item $\p_0(m\otimes a_1\otimes\cdots \otimes a_n)=(ma_1\otimes a_2\otimes\cdots \otimes a_n)$,
\item $\p_i(m\otimes a_1\otimes \cdots \otimes a_n)=(m\otimes a_1\otimes \cdots \otimes a_ia_{i+1}\otimes\cdots \otimes a_n)$ for $1\leqslant i\leqslant n-1$,
\item $\p_n(m\otimes a_1\otimes\cdots \otimes a_n)=(a_nm\otimes a_1\otimes\cdots \otimes a_{n-1})$
\end{itemize}
and the degeneracy maps insert the multiplicative identity $1_A$ into the elementary tensor. The boundary map is denoted by $b$. In the case where $M=A$ we denote the Hochschild complex by $C_{\star}(A)$. The homology of $C_{\star}\left(A,M\right)$ is denoted by $HH_{\star}\left(A,M\right)$ and is called the \emph{Hochschild homology} of $A$ with coefficients in $M$.
\end{defn}

\begin{prop}
\label{AugSplit}
Let $A$ be an augmented $k$-algebra with augmentation ideal $I$. Let $M$ be an $A$-bimodule which is flat over $k$. The normalized complex of the Hochschild complex is canonically isomorphic to the chain complex $C_{\star}(I,M)$, formed analogously to the Hochschild complex.  
\end{prop}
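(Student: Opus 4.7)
The plan is to exploit the $k$-module splitting $A \cong I \oplus k \cdot 1_A$ provided by the augmentation and the unit. Applying tensor distributivity slotwise yields a direct sum decomposition
\[
M \otimes_k A^{\otimes n} \;\cong\; \bigoplus_{S \subseteq \ul{n}} M \otimes T_S ,
\]
where $T_S$ denotes the tensor product whose $i$th factor is $I$ for $i \in S$ and $k \cdot 1_A$ for $i \notin S$. The summand $T_{\ul{n}}$ is canonically $I^{\otimes n}$, while every other $T_S$ carries the factor $1_A$ in at least one slot.

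First I would identify the degenerate subcomplex with the complementary direct sum. The degeneracy $s_j$ inserts $1_A$ in position $j$, so its image lies in $\bigoplus_{j \notin S} M \otimes T_S$; taking the union over $j$ gives the inclusion $\bigoplus_{S \subsetneq \ul{n}} M \otimes T_S \subseteq D_n(A, M)$. Conversely any basic tensor lying in some $M \otimes T_S$ with $j \notin S$ is the image under $s_{j-1}$ (with the usual indexing) of the basic tensor obtained by deleting the $1_A$ in position $j$, so the reverse inclusion also holds. Passing to the quotient therefore produces a canonical $k$-linear isomorphism
\[
N_n(A, M) \;\cong\; M \otimes I^{\otimes n} \;=\; C_n(I, M)
\]
in each degree.

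Next I would check compatibility with the differentials. The inner Hochschild faces $\p_i$ for $1 \leq i \leq n-1$ multiply adjacent tensor entries, and since $I$ is an ideal the product of two elements of $I$ remains in $I$; the extreme faces $\p_0$ and $\p_n$ act through the bimodule structure of $M$, which produces elements of $M$. Hence $b$ restricts to the boundary of $C_\star(I, M)$ formed by the analogous formulas, so the pointwise isomorphism above is a chain map.

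The main subtlety is ensuring that the decomposition $M \otimes A^{\otimes n} = \bigoplus_S M \otimes T_S$ is honest, i.e.\ that the slotwise splittings $A \cong I \oplus k \cdot 1_A$ survive tensoring with $M$ and with further copies of $A$. This is exactly where the flatness hypothesis on $M$ over $k$ is used: it guarantees that the functor $M \otimes_k -$ preserves each of the slotwise short exact sequences $0 \to I \to A \to k \to 0$, so no Tor obstruction interferes and the direct sum decomposition holds on the nose. The rest of the argument is then essentially bookkeeping.
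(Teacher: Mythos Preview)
Your argument is the standard one and matches what the paper intends by calling the result well-known (the paper itself defers to \cite[Section 5.2]{DMG} for the details). The identification of the degenerate subcomplex with $\bigoplus_{S \subsetneq \ul{n}} M \otimes T_S$ and the check that the Hochschild differential restricts to $M\otimes I^{\otimes n}$ are exactly right.

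There is, however, a misstep in your final paragraph. The short exact sequence $0 \to I \to A \xrightarrow{\varepsilon} k \to 0$ is already \emph{split} as a sequence of $k$-modules: the unit map $k \to A$, $\lambda \mapsto \lambda \cdot 1_A$, is a section of the augmentation. Consequently the decomposition $A \cong I \oplus k$, and hence $M \otimes_k A^{\otimes n} \cong \bigoplus_S M \otimes_k T_S$, survive tensoring with any $k$-module whatsoever; no flatness is required and no Tor obstruction can arise. So flatness of $M$ is not what makes your decomposition honest, and the argument you wrote in fact goes through verbatim for arbitrary $M$. The flatness hypothesis in the statement is either a standing convenience carried for later use in the paper or simply superfluous for this particular proposition; in any case it is not doing the work you attribute to it.
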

\begin{proof}
This result is well-known. A full proof can be found in \cite[Section 5.2]{DMG}.
\end{proof}

\subsection{Shuffle complex}
It is shown in \cite[Section 4.2.8]{Lod} that the total shuffle operators of Definition \ref{tot-shuff} form a chain map $sh_n\colon C_n(A)\rightarrow C_n(A)$, allowing us to make the following definition.

\begin{defn}
\label{shuff-cpx-defn}
Let $a_0 \otimes a_1\otimes\cdots \otimes a_n$ be an elementary tensor of the $k$-module $C_n(A)$. Let \[\sum_{i=1}^{n-1}sh_{i,n-i}(a_0\otimes a_1\otimes\cdots \otimes a_n)\] denote the linear combination of tensors obtained by applying the total shuffle operator. Let $Sh_n(A)$ denote the submodule of $C_n(A)$ generated by all such $k$-linear combinations obtained from the $k$-module generators of $C_n(A)$. 
The \emph{shuffle complex}, denoted $Sh_{\star}(A)$, is defined to have the module $Sh_n(A)$ in degree $n$ with boundary map induced from the Hochschild differential.
\end{defn}

\subsection{Eulerian idempotents}
In the case where the ground ring $k$ contains $\mathbb{Q}$ and $A$ is a commutative $k$-algebra, we can decompose the Hochschild complex into a direct sum of subcomplexes. We recall the Eulerian idempotents from \cite[Proposition 4.5.3]{Lod}.

\begin{prop}
\label{idempotent}
For $n\geqslant 1$ and $1\leqslant i\leqslant n$, there exist pairwise-disjoint idempotent elements $e_n^{(i)}\in \mathbb{Q}[\Sigma_n]$ such that $e_n^{(1)}+\cdots +e_n^{(n)}$ is the identity in $\mathbb{Q}[\Sigma_n]$. These elements are called the \emph{Eulerian idempotents}. \QEDA
\end{prop}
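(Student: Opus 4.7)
The plan is to realize the Eulerian idempotents as spectral projections for a family of Adams operations acting on $\mathbb{Q}[\Sigma_n]$. Recall that the tensor algebra $T(V)$ on a $\mathbb{Q}$-vector space $V$ is a Hopf algebra under concatenation product and shuffle coproduct. The $k$-fold convolution power of the identity endomorphism of $T(V)$, written $\psi^k$, restricts on $V^{\otimes n}$ to right multiplication by a canonical element $\psi_n^k \in \mathbb{Q}[\Sigma_n]$ (independent of $V$, provided $V$ is of sufficiently large dimension). The idempotents $e_n^{(i)}$ will be extracted from the spectral decomposition of these operators by Lagrange interpolation.

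The central step is the polynomial identity $\prod_{i=1}^n (\psi_n^k - k^i) = 0$ in $\mathbb{Q}[\Sigma_n]$ for each fixed integer $k \geq 2$. I would deduce this via the Cartier--Milnor--Moore theorem: the primitives of $T(V)$ form the free Lie algebra $L(V)$, and the Poincar\'e--Birkhoff--Witt isomorphism gives a decomposition $V^{\otimes n} \cong \bigoplus_{i=1}^n S^i(L(V))_n$ of weight-$n$ pieces. Since $\psi^k$ is a Hopf algebra endomorphism acting as multiplication by $k$ on the space of primitives, it acts as multiplication by $k^i$ on the $i$-th symmetric summand, producing the required identity once $V$ has rank large enough that right multiplication on $V^{\otimes n}$ faithfully detects elements of $\mathbb{Q}[\Sigma_n]$. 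Granted this, define $e_n^{(i)} := p_i(\psi_n^k)$ where $p_i(x) \in \mathbb{Q}[x]$ is the Lagrange interpolant with $p_i(k^j) = \delta_{ij}$. The idempotent relations $e_n^{(i)} e_n^{(j)} = \delta_{ij} e_n^{(i)}$ and the partition of unity $\sum_{i=1}^n e_n^{(i)} = 1$ then follow at once from the corresponding identities among the $p_i$ modulo the minimal polynomial of $\psi_n^k$. Independence of the choice of $k$ is immediate from the intrinsic characterization of $e_n^{(i)}$ as the projector onto $S^i(L(V))_n$.

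The main obstacle is establishing the spectral decomposition rigorously, in particular controlling the eigenvalues of $\psi^k$ on each PBW summand and handling the faithfulness step by taking $\dim V \geq n$. Once the Hopf-algebraic setup is in place the idempotent construction is formal, but getting there requires nontrivial machinery. A purely combinatorial alternative is to work inside Solomon's descent algebra, expressing $\psi_n^k$ as a signed sum over permutations stratified by descent composition and diagonalizing directly; this avoids Hopf-algebraic input but replaces it with substantial explicit computation, which is why the eigenspace approach is the one I would pursue.
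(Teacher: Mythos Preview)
The paper does not actually prove this proposition: it is quoted from Loday's book (the surrounding text cites \cite[Proposition~4.5.3]{Lod}) and closed immediately with a \QEDA. There is therefore no argument in the paper to compare yours against.

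Your proposal is a correct and well-known route to the Eulerian idempotents, essentially the Hopf-algebraic construction going back to Patras and implicit in Gerstenhaber--Schack: realize $\psi^k$ as the $k$-fold convolution of the identity on the cocommutative Hopf algebra $(T(V),\cdot,\Delta_{\mathrm{sh}})$, use PBW to identify $V^{\otimes n}\cong\bigoplus_{i=1}^n S^i(L(V))_n$, read off the eigenvalue $k^i$ on the $i$th summand, and interpolate. The faithfulness step (take $\dim V\geqslant n$ so that $\mathbb{Q}[\Sigma_n]$ embeds in $\mathrm{End}(V^{\otimes n})$) is exactly the right way to transport the operator identity back into the group algebra. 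Loday's own proof in \cite{Lod} is more combinatorial: he works inside Solomon's descent algebra, expanding the logarithm of an element built from the total shuffle operator $sh_n$ and extracting the $e_n^{(i)}$ as coefficients in a generating series. Your approach has the advantage of making the spectrum $\{k,k^2,\dotsc,k^n\}$ transparent and of explaining intrinsically why the idempotents are independent of $k$; Loday's buys explicit formulae in terms of descent classes without invoking PBW or Milnor--Moore. Either is adequate here, and both produce the same family of orthogonal idempotents.
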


The Eulerian idempotents act on $C_n(A,M)=M\otimes A^{\otimes n}$ by $1_M\otimes e_n^{(i)}$. Let $(m\otimes a_1\otimes\cdots \otimes a_n)$ be a $k$-module generator of $C_n(A,M)$. Let $e_n^{(i)}(m\otimes a_1\otimes\cdots\otimes a_n)$ denote the $k$-linear combination of tuples obtained by applying the Eulerian idempotent to our generator. This is simply a linear combination of permutations of our generator. The subcomplex $e_{\star}^{(i)}C_{\star}(A,M)$ is generated in degree $n$ by all linear combinations of the form $e_n^{(i)}(m\otimes a_1\otimes\cdots\otimes a_n)$, arising from $k$-module generators of $C_n(A,M)$. The following is a theorem of Gerstenhaber and Schack \cite{GS87}. 

\begin{thm}
\label{Hodge}
Let $k$ be a commutative ring containing $\mathbb{Q}$. Let $A$ be a $k$-algebra and let $M$ be a symmetric $A$-bimodule. The Eulerian idempotents $e_n^{(i)}$ naturally split the Hochschild complex into a direct sum of subcomplexes:
\[C_{\star}(A,M)=\bigoplus_{i= 1}^{\infty} e_{\star}^{(i)}C_{\star}(A,M).\]
Thus, this provides a decomposition of Hochschild homology:
\[\pushQED{\qed} H_n(A,M)= H_n\left(e_n^{(1)}C_{\star}(A,M)\right)\oplus \cdots \oplus H_n\left(e_n^{(n)}C_{\star}(A,M)\right). \qedhere \popQED\]
\end{thm}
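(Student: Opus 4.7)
The strategy is to upgrade the algebraic decomposition of $\mathbb{Q}[\Sigma_n]$ coming from the Eulerian idempotents to a decomposition of the chain complex $C_\star(A,M)$. Pairwise orthogonality together with $e_n^{(1)} + \cdots + e_n^{(n)} = 1$ gives, upon applying $1_M \otimes e_n^{(i)}$ to $C_n(A,M) = M \otimes A^{\otimes n}$, a direct sum decomposition of $k$-modules in each degree. The real content is therefore to verify that the Hochschild differential $b$ respects this decomposition, i.e.\ that $b \circ (1_M \otimes e_n^{(i)}) = (1_M \otimes e_{n-1}^{(i)}) \circ b$ for every $i$.

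Following \cite{Lod}, I would realise the $e_n^{(i)}$ as the spectral projections of a commuting family of ``Adams-type'' operators $\lambda^r$ on $C_\star(A,M)$ for $r \geqslant 2$, built from weighted signed sums of $(p,q)$-shuffles. Each $e_n^{(i)}$ cuts out the eigenspace of $\lambda^r$ for the eigenvalue $r^i$ and can be written as a rational polynomial in finitely many of the $\lambda^r$. It is therefore enough to show that every $\lambda^r$ commutes with $b$, which reduces to the identity $b \circ sh_{p,q} = sh_{p,q} \circ b$ on $C_\star(A,M)$ for each $(p,q)$-shuffle operator $sh_{p,q}$. This is the same kind of argument quoted from \cite[Section 4.2.8]{Lod} for the total shuffle on $C_\star(A)$: commutativity of $A$ is needed so that the inner faces $\p_i$ for $1 \leqslant i \leqslant n-1$ merge adjacent tensor factors correctly once they have been transposed by a shuffle, while the hypothesis that $M$ is a symmetric bimodule is required to handle the outer faces $\p_0$ and $\p_n$, where the identities $am = ma$ are needed to make the shuffled and unshuffled expressions in $M \otimes A^{\otimes (n-1)}$ agree.

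The main obstacle is precisely this combinatorial commutation $[b,sh_{p,q}] = 0$; once it is in place, everything else is formal. The splitting of $C_\star(A,M)$ as a direct sum of subcomplexes $e_\star^{(i)} C_\star(A,M)$ then follows at once from the orthogonal idempotent decomposition in each degree, and it passes to homology term by term because homology commutes with finite direct sums, yielding
\[HH_n(A,M) = \bigoplus_{i=1}^n H_n\bigl(e_\star^{(i)} C_\star(A,M)\bigr).\]
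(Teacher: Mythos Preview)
The paper does not actually prove this theorem: it is attributed to Gerstenhaber and Schack \cite{GS87} and the statement is closed with a QED symbol, signalling that no proof is given in the paper. There is therefore nothing in the paper to compare your argument against.

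For what it is worth, your outline is the standard route taken in \cite[4.5]{Lod} and is correct. The decomposition of each $C_n(A,M)$ as a $k$-module is immediate from Proposition~\ref{idempotent}, and the substantive step is exactly the compatibility $b\circ e_n^{(i)} = e_{n-1}^{(i)}\circ b$. Realising the $e_n^{(i)}$ as the spectral projectors of the $\lambda$-operations and then reducing to the fact that each $sh_{p,q}$ is a chain map is precisely how Loday organises the argument; your identification of where commutativity of $A$ (for the inner faces $\p_i$, $1\leqslant i\leqslant n-1$) and symmetry of the bimodule $M$ (for the outer faces $\p_0$ and $\p_n$) are used is accurate. One small point of phrasing: the $\lambda^r$ are not literally linear combinations of the $sh_{p,q}$ but are obtained by iterating the total shuffle operator, so the reduction passes through the chain-map property of $sh_n$ rather than of each $sh_{p,q}$ separately; this does not affect the validity of the sketch.
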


\section{Functor Homology}
\label{FH-section}
Symmetric homology \cite[Definition 14]{Ault} is defined in the setting of functor homology. Gamma homology has a functor homology description due to Pirashvili and Richter \cite{Pir00}. In this section we recall the necessary constructions of functor homology, as found in \cite{Pir02} for example. We also recall a chain complex construction of Gabriel and Zisman \cite[Appendix 2]{GZ} used to calculate the homology groups of functors. 

\subsection{Modules over a category}
\begin{defn}
Let $\C$ be a small category. We define the \emph{category of left $\mathbf{C}$-modules}, denoted $\CM$, to be the functor category $\mr{Fun}\left(\mathbf{C},\kmod\right)$. We define the \emph{category of right $\mathbf{C}$-modules}, denoted $\MC$, to be the functor category $\mr{Fun}\left(\mathbf{C}^{op},\kmod\right)$.
\end{defn}

\begin{defn}
\label{triv-c-mod}
We define the \emph{trivial right $\mathbf{C}$-module}, $k^{\star}$, to be the constant functor at the trivial $k$-module.
\end{defn}

It is well-known that the categories $\CM$ and $\MC$ are abelian, see for example \cite[Section 1.6]{Pir02}.

\subsection{Tensor product of functors}
\begin{defn}
\label{tensor-obj}
Let $G$ be an object of $\MC$ and $F$ be an object of $\CM$. We define the tensor product $G\otimes_{\C} F$ to be the $k$-module
\[\frac{\bigoplus_{C\in \mr{Ob}(\C)} G(C)\otimes_k F(C)}{\llangle G(\alpha)(x)\otimes y - x\otimes F(\alpha)(y)\rrangle}\]
where 
\[\llangle G(\alpha)(x)\otimes y - x\otimes F(\alpha)(y)\rrangle\]
is the $k$-submodule generated by the set
\[\llb G(\alpha)(x)\otimes y - x\otimes F(\alpha)(y): \alpha \in \mr{Hom}(\C),\,\, x\in src(G(\alpha)),\,\, y\in src(F(\alpha))\rrb.\]
\end{defn}

This quotient module is spanned $k$-linearly by equivalence classes of elementary tensors in $\bigoplus_{C\in \mr{Ob}(\C)} G(C)\otimes_k F(C)$ which we will denote by $\left[x\otimes y\right]$.

\begin{defn}
The bifunctor
\[-\otimes_{\mathbf{C}}-\colon \MC \times \CM\rightarrow \mathbf{kMod}\]
is defined on objects by $(G,F)\mapsto G\otimes_{\C} F$. Given two natural transformations 
\[\Theta \in \mr{Hom}_{\MC}\left(G, G_1\right)\quad \text{and} \quad \Psi\in \mr{Hom}_{\CM}\left(F,F_1\right),\] the morphism $\Theta \otimes_{\C} \Psi$ is determined by $\left[x\otimes y\right] \mapsto \left[ \Theta_C(x)\otimes \Psi_C(y)\right]$.
\end{defn} 

It is well-known that the bifunctor $-\otimes_{\C} -$ is right exact with respect to both variables and preserves direct sums, see for example \cite[Section 1.6]{Pir02}.

\begin{defn}
We denote the left derived functors of the bifunctor $-\otimes_{\mathbf{C}}-$ by $\mathrm{Tor}_{\star}^{\mathbf{C}}(-,-)$.
\end{defn}

\subsection{Gabriel-Zisman complex}
Recall the nerve of $\C$ \cite[B.12]{Lod}. $N_{\star}\C$ is the simplicial set such that $N_n\C$ for $n\geqslant 1$ consists of all strings of composable morphisms of length $n$ in $\C$ and $N_0\C$ is the set of objects in $\C$. The face maps are defined to either compose adjacent morphisms in the string or truncate the string and the degeneracy maps insert identity morphisms into the string. We will denote an element of $N_n\C$ by $\left(f_n , \dotsc , f_1\right)$ where $f_i\in \mr{Hom}_{\C}\left(C_{i-1}, C_i\right)$.

For a small category $\C$ and a functor $F\in \CM$ there is a simplicial $k$-module, denoted $C_{\star}\left(\C , F\right)$ due to Gabriel and Zisman \cite[Appendix 2]{GZ} whose $n^{th}$ homology is canonically isomorphic to $\mr{Tor}_{n}^{\C}\left(k^{\star} , F\right)$.

\begin{defn}
\label{GabZisCpx}
Let $F\in \CM$. We define
\[C_n(\C,F)=\bigoplus_{(f_n,\dotsc ,f_1)} F(C_0)\]
where the sum runs through all elements $(f_n,\dotsc ,f_1)$ of $N_n\C$ and $f_i\in \mr{Hom}_{\C}\left(C_{i-1}, C_i\right)$. We write a generator of $C_n(\C,F)$ in the form $(f_n,\dotsc, f_1,x)$ where $(f_n,\dotsc ,f_1)\in N_n\C$ indexes the summand and $x\in F(C_0)$. 
The face maps $\p_i\colon C_n(\C,F)\rightarrow C_{n-1}(\C,F)$ are determined by
\[
\p_i(f_n,\dotsc, f_1,x)=\begin{cases}
(f_n,\dotsc ,f_2, F(f_1)(x)) & i=0,\\
(f_n, \dotsc , f_{i+1}\circ f_i,\dotsc ,f_1,x) & 1\leqslant i \leqslant n-1,\\
(f_{n-1},\dotsc ,f_1, x) & i=n.
\end{cases}\]
The degeneracy maps insert identity maps into the string.
By abuse of notation we will also denote the associated chain complex by $C_{\star}\left(\C , F\right)$.
\end{defn}

\section{Harrison Homology}
\label{Harr-section}
\subsection{Harrison homology}
We recall the definition of the Harrison complex for a commutative algebra as a quotient of the Hochschild complex. We recall that when the ground ring contains $\mathbb{Q}$ there is an alternative description of the Harrison complex as a subcomplex of the Hochschild complex.

\begin{defn}
Let $A$ be a commutative $k$-algebra. The \emph{Harrison complex} of $A$ is defined to be the quotient of the Hochschild complex $C_{\star}(A)$ by the shuffle complex $Sh_{\star}(A)$. That is,
\[CHarr_{\star}(A)\coloneqq \frac{C_{\star}(A)}{Sh_{\star}(A)}\]
with boundary map induced from the Hochschild boundary map.
\end{defn}

\begin{defn}
Let $A$ be a commutative $k$-algebra. Let $M$ be a symmetric $A$-bimodule. We define the \emph{Harrison complex of $A$ with coefficients in $M$} to be the complex
\[CHarr_{\star}(A,M) \coloneqq M\otimes_A CHarr_{\star}(A).\]
\end{defn}

\begin{defn}
The homology of the chain complex $M\otimes_A CHarr_{\star}(A)$ is called the \emph{Harrison homology of $A$ with coefficients in $M$} and is denoted by $Harr_{\star}(A,M)$.
\end{defn}

Recall the decomposition of the Hochschild complex from Theorem \ref{Hodge}. The following result is due to Barr \cite{Barr}.

\begin{prop}
Let $k\supseteq \mathbb{Q}$. Let $A$ be a commutative $k$-algebra and let $M$ be an $A$-bimodule. There is a natural isomorphism of chain complexes
\[\pushQED{\qed} e_{\star}^{(1)}C_{\star}(A,M)\cong CHarr_{\star}(A,M). \qedhere \popQED\]
\end{prop}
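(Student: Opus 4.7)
The plan is to realize both $e_\star^{(1)}C_\star(A,M)$ and $CHarr_\star(A,M)$ as complementary pieces of the Eulerian decomposition of $C_\star(A,M)$, and to exhibit mutually inverse chain maps between them. Let $\pi\colon C_\star(A,M)\twoheadrightarrow CHarr_\star(A,M)$ be the canonical quotient by the shuffle subcomplex, and let $\iota\colon e_\star^{(1)}C_\star(A,M)\hookrightarrow C_\star(A,M)$ be the inclusion.

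The pivotal algebraic input, extracted from the theory of Eulerian idempotents in \cite[\S 4.5]{Lod}, is the pair of identities in $\mathbb{Q}[\Sigma_n]$: first, that $e_n^{(1)}$ annihilates every shuffle element $sh_{p,q}$ for $1\leqslant p\leqslant n-1$; and second, that each summand $e_n^{(i)}\mathbb{Q}[\Sigma_n]$ with $i\geqslant 2$ sits inside the left ideal generated by shuffle elements. Acting coefficient-wise on $C_n(A,M)=M\otimes A^{\otimes n}$, the first half shows that $e_n^{(1)}$ kills the shuffle submodule and therefore descends to a well-defined $k$-linear map $\bar e_n^{(1)}\colon CHarr_n(A,M)\to e_n^{(1)}C_n(A,M)$; the second half shows that for any $x\in C_n(A,M)$, the higher-weight summands $\sum_{i\geqslant 2}e_n^{(i)}x$ of the Hodge decomposition of Theorem \ref{Hodge} lie in $Sh_n(A,M)$, so $x$ and $e_n^{(1)}x$ agree modulo shuffles.

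From here the proof is short. Since $e_n^{(1)}$ is idempotent, $\bar e_n^{(1)}\circ(\pi\circ\iota)$ is the identity on $e_n^{(1)}C_n(A,M)$; and by the preceding paragraph $(\pi\circ\iota)\circ\bar e_n^{(1)}$ is the identity on $CHarr_n(A,M)$. Both maps are chain maps because $\pi$ respects the boundary (as $Sh_\star(A)$ is a subcomplex by Definition \ref{shuff-cpx-defn}) and the Eulerian idempotents commute with the Hochschild differential, which is precisely what makes the decomposition in Theorem \ref{Hodge} a decomposition of \emph{chain} complexes. Naturality in $A$ and $M$ is transparent from the formulas, since every ingredient is built from the algebra multiplication, the bimodule structure and elements of $\mathbb{Q}[\Sigma_n]$ acting on tensor factors.

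The principal obstacle is the two-sided group-algebra fact underpinning Step 2: that $e_n^{(1)}$ is exactly the projector complementary to the shuffle left ideal in $\mathbb{Q}[\Sigma_n]$. Both halves are non-trivial structural results, ultimately resting on the Poincar\'e--Birkhoff--Witt theorem and the identification of $e_n^{(1)}\mathbb{Q}[\Sigma_n]$ with a Dynkin-type projector onto the free Lie operad; these form the technical backbone of Barr's original argument and are the reason the hypothesis $k\supseteq\mathbb{Q}$ is essential. Everything else in the proof is bookkeeping once these are in hand.
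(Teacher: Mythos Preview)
The paper does not supply a proof of this proposition: it is attributed to Barr \cite{Barr} and stated with a tombstone in lieu of argument. Your sketch is the standard proof (essentially Barr's, as laid out in Loday \S4.5) and is correct; the one point to keep straight is that the paper defines $Sh_n(A)$ as the image of the \emph{total} shuffle operator $sh_n$ rather than the span of the individual $sh_{p,n-p}$, so your claim (2) should be phrased accordingly---but over $\mathbb{Q}$ the two coincide, which is part of the same Eulerian-idempotent machinery you are already invoking.
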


\subsection{Normalized Harrison homology}
\label{norm-Harr-chapter}
Suppose $k$ contains $\mathbb{Q}$ and let $A$ be an augmented commutative $k$-algebra with augmentation ideal $I$. Let $M$ be a symmetric $A$-bimodule which is flat over $k$. In this section we define a normalized version of Harrison homology under these conditions. 

Forming a normalized Harrison homology is not straightforward in general. Whilst we can form a description of the Harrison complex as a subcomplex of the Hochschild complex, this is because the Hochschild boundary map is compatible with the Eulerian idempotents. In fact, the Eulerian idempotents are not compatible with individual face and degeneracy maps, so the Harrison subcomplex does not arise as a chain complex associated to a simplicial object.

Let $D_{\star}\left(A,M\right)$ denote the degenerate subcomplex of the Hochschild complex. That is, the subcomplex of $C_{\star}(A,M)$ generated by elementary tensors for which at least one tensor factor is equal to $1_A$, the multiplicative identity in $A$. The Eulerian idempotents split the degenerate subcomplex in precisely the same way that they split the Hochschild complex. We therefore have a natural isomorphism of chain complexes
\[D_{\star}(A,M)\cong\bigoplus_{i=1}^{\infty} e_{\star}^{(i)}D_{\star}(A,M).\]
Furthermore, since the degenerate subcomplex is acyclic, each summand of the decomposition is acyclic.

By inclusion, $e_{\star}^{(i)}D_{\star}(A,M)$ is a subcomplex of $e_{\star}^{(i)}C_{\star}(A,M)$. In particular, taking $i=1$, $e_{\star}^{(1)}D_{\star}(A,M)$ is a subcomplex of the Harrison complex. This gives rise to a short exact sequence of chain complexes; for each $i\geqslant 1$ we have 
\[0\rightarrow e_{\star}^{(i)}D_{\star}(A,M)\rightarrow e_{\star}^{(i)}C_{\star}(A,M)\xrightarrow{Q_i} \frac{e_{\star}^{(i)}C_{\star}(A,M)}{e_{\star}^{(i)}D_{\star}(A,M)}\rightarrow 0.\]

By a standard construction this gives rise to a long exact sequence in homology. Since the complex $e_{\star}^{(i)}D_{\star}(A,M)$ is acyclic we can deduce that the quotient map $Q_i$ is a quasi-isomorphism for each $i\geqslant 1$.

\begin{prop}
There is an isomorphism of $k$-modules
\[H_{n}\left(e_{\star}^{(i)}C_{\star}(A,M)\right)\cong H_{n}\left(\frac{e_{\star}^{(i)}C_{\star}(A,M)}{e_{\star}^{(i)}D_{\star}(A,M)}\right)\]
for each $n\geqslant 0$ and $i\geqslant 1$. \QEDA
\end{prop}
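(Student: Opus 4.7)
The proof is essentially already outlined in the paragraphs immediately preceding the statement; the proposition is simply the formal consequence of that discussion. The plan is to apply the long exact sequence in homology to the short exact sequence of chain complexes
\[0\rightarrow e_{\star}^{(i)}D_{\star}(A,M)\rightarrow e_{\star}^{(i)}C_{\star}(A,M)\xrightarrow{Q_i} \frac{e_{\star}^{(i)}C_{\star}(A,M)}{e_{\star}^{(i)}D_{\star}(A,M)}\rightarrow 0,\]
and to show that the kernel is acyclic, so that $Q_i$ is a quasi-isomorphism; the claimed isomorphism is then just a restatement of this.

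The key input that still needs acknowledgement is the acyclicity of $e_{\star}^{(i)}D_{\star}(A,M)$ for each $i\geqslant 1$. First I would recall that the degenerate subcomplex $D_{\star}(A,M)$ of a simplicial $k$-module is classically acyclic (this is a standard consequence of the normalization theorem; see for instance \cite[Section 8.3]{weib}). Next I would invoke the Eulerian idempotent splitting: because the idempotents $e_n^{(i)}$ are central in $\mathbb{Q}[\Sigma_n]$ and commute (in the sense made precise by Theorem \ref{Hodge}) with the Hochschild differential, the decomposition of Theorem \ref{Hodge} restricts to a decomposition
\[D_{\star}(A,M)\cong\bigoplus_{i=1}^{\infty} e_{\star}^{(i)}D_{\star}(A,M)\]
of chain complexes, as already recorded in the excerpt. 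Since a direct sum decomposition of an acyclic complex has acyclic summands, each $e_{\star}^{(i)}D_{\star}(A,M)$ is acyclic.

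The final step is routine homological algebra: from the long exact sequence
\[\cdots \rightarrow H_n\bigl(e_{\star}^{(i)}D_{\star}(A,M)\bigr)\rightarrow H_n\bigl(e_{\star}^{(i)}C_{\star}(A,M)\bigr)\xrightarrow{(Q_i)_{\ast}} H_n\!\left(\frac{e_{\star}^{(i)}C_{\star}(A,M)}{e_{\star}^{(i)}D_{\star}(A,M)}\right)\rightarrow H_{n-1}\bigl(e_{\star}^{(i)}D_{\star}(A,M)\bigr)\rightarrow\cdots\]
the vanishing of the outer terms forces $(Q_i)_{\ast}$ to be an isomorphism in every degree $n\geqslant 0$, which is exactly the claim.

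There is no real obstacle here; the only substantive point is making sure the Eulerian splitting really does restrict to the degenerate subcomplex, but this is immediate because the degeneracy maps insert $1_A$ into the tensor factors and this operation is preserved by the $\Sigma_n$-action through which the idempotents act, so $D_{\star}(A,M)$ is a $\Sigma_{\star}$-stable subcomplex and the argument of Theorem \ref{Hodge} applies verbatim.
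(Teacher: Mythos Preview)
Your proposal is correct and follows exactly the argument the paper sketches in the paragraphs preceding the proposition: the short exact sequence, the acyclicity of each summand $e_{\star}^{(i)}D_{\star}(A,M)$ inherited from the acyclicity of $D_{\star}(A,M)$, and the resulting quasi-isomorphism via the long exact sequence. The paper treats this as immediate (hence the $\square$ with no displayed proof), and your write-up simply fills in those standard details.
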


Recall the chain complex $C_{\star}\left(I,M\right)$ of Proposition \ref{AugSplit}. Once again we can use the Eulerian idempotents to obtain a splitting: 
\[C_{\star}(I,M)\cong\bigoplus_{i=1}^{\infty} e_{\star}^{(i)}C_{\star}(I,M).\]

Having formed these two splittings of chain complexes using well-known methods, we prove that the chain complex $e_{\star}^{(i)}C_{\star}(I,M)$ computes the Harrison homology of an augmented algebra.

\begin{lem}
For each $i\geqslant 1$ there is an isomorphism of chain complexes
\[f_i\colon\frac{e_{\star}^{(i)}C_{\star}(A,M)}{e_{\star}^{(i)}D_{\star}(A,M)}\rightarrow e_{\star}^{(i)}C_{\star}(I,M).\]
\end{lem}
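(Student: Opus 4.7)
The plan is to leverage Proposition \ref{AugSplit} to first establish a $k$-module splitting of the Hochschild complex, then show that this splitting is respected by the Eulerian idempotents, and finally conclude by identifying the relevant summands.

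First, I would unpack Proposition \ref{AugSplit} to note that the decomposition $A\cong I\oplus k$ induces, via multilinearity, a $k$-module direct-sum decomposition
\[C_n(A,M)=D_n(A,M)\oplus C_n(I,M),\]
where $C_n(I,M)=M\otimes I^{\otimes n}$ sits inside $C_n(A,M)$ as the span of basic tensors all of whose tensor factors are non-trivial, and $D_n(A,M)$ is spanned by basic tensors with at least one trivial tensor factor. Under this splitting, the canonical quotient map $C_\star(A,M)\twoheadrightarrow C_\star(A,M)/D_\star(A,M)$ becomes, after identification with $C_\star(I,M)$, simply the projection onto the second summand.

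Next, I would verify that this $k$-module splitting is preserved by the Eulerian idempotents. Since each $e_n^{(i)}$ is a $\mathbb{Q}$-linear combination of elements of $\Sigma_n$ acting by permutation of the tensor factors, and a permutation sends a basic tensor to a basic tensor with the same multiset of factors, the number of trivial factors is preserved. Hence both $D_n(A,M)$ and $C_n(I,M)$ are stable under the $e_n^{(i)}$-action, giving
\[e_n^{(i)}C_n(A,M)=e_n^{(i)}D_n(A,M)\oplus e_n^{(i)}C_n(I,M).\]
Passing to the quotient by $e_n^{(i)}D_n(A,M)$ yields, in each degree, a $k$-module isomorphism $f_i$ as required, and this isomorphism is induced by the projection onto $C_n(I,M)$.

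Finally, I would check that $f_i$ is a chain map. Because the Hochschild boundary $b$ commutes with each $e_n^{(i)}$ (this is precisely what produces the splitting of the Hochschild complex into the $e_\star^{(i)}C_\star(A,M)$) and because the quotient $C_\star(A,M)/D_\star(A,M)\xrightarrow{\sim} C_\star(I,M)$ of Proposition \ref{AugSplit} is a chain map, the induced boundaries on the two sides agree under $f_i$. There is no real obstacle here: the argument is essentially the observation that all the decompositions in sight --- the splitting via the augmentation, the splitting via the Eulerian idempotents, and the passage to the Moore/normalized quotient --- are mutually compatible because the Eulerian idempotents are built out of permutations of tensor positions.
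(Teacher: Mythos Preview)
Your argument is correct and is essentially the same as the paper's: both identify representatives of the quotient as those $e_\star^{(i)}$-images of tensors with all factors in $I$, using that the Eulerian idempotents are permutation sums and hence preserve the ``all factors non-trivial'' condition. The paper states this more tersely by simply asserting a choice of representatives and writing down the map and its inverse, whereas you make the underlying direct-sum splitting $C_n(A,M)=D_n(A,M)\oplus C_n(I,M)$ and its stability under $e_n^{(i)}$ explicit; but the content is the same.
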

\begin{proof}
We can choose representatives such that the quotient complex, in degree $n$, is generated by equivalence classes
\[ \left[ e_{\star}^{(i)}(m\otimes y_1\otimes \cdots \otimes y_n)\right]\]
where each $y_i$ is an element of the augmentation ideal $I$. 

With this choice of representatives, we have a well-defined map of chain complexes 
\[f_i\colon\frac{e_{\star}^{(i)}C_{\star}(A,M)}{e_{\star}^{(i)}D_{\star}(A,M)}\rightarrow e_{\star}^{(i)}C_{\star}(I,M)\]
determined by $\left[ e_{\star}^{(i)}(m\otimes y_1\otimes \cdots \otimes y_n)\right]\mapsto  e_{\star}^{(i)}(m\otimes y_1\otimes \cdots \otimes y_n)$
on generators in degree $n$.

The inverse is given by the map determined by sending a generator $e_{\star}^{(i)}(m\otimes y_1\otimes \cdots \otimes y_n)$ of $e_{\star}^{(i)}C_{\star}(I,M)$ to its equivalence class in the quotient.
\end{proof}

Let $I_i \colon e_{\star}^{(i)}C_{\star}(I,M)\rightarrow e_{\star}^{(i)}C_{\star}(A,M)$ denote
the inclusion of chain complexes.

\begin{thm}
\label{Harr-Thm}
Let $k\supseteq\mathbb{Q}$ and let $A$ be a augmented, commutative $k$-algebra with augmentation ideal $I$. Let $M$ be a symmetric $A$-bimodule which is flat over $k$. For each $i\geqslant 1$ there is an isomorphism of chain complexes
\[e_{\star}^{(i)}C_{\star}(A,M)\cong e_{\star}^{(i)}C_{\star}(I,M)\oplus e_{\star}^{(i)}D_{\star}(A,M).\]
\end{thm}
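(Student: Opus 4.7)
The plan is to exhibit a splitting of the short exact sequence
\[0\rightarrow e_{\star}^{(i)}D_{\star}(A,M)\rightarrow e_{\star}^{(i)}C_{\star}(A,M)\xrightarrow{Q_i} \frac{e_{\star}^{(i)}C_{\star}(A,M)}{e_{\star}^{(i)}D_{\star}(A,M)}\rightarrow 0\]
already established, and then transport it across the isomorphism $f_i$ from the preceding lemma. The immediate candidate for a section is the inclusion $I_i\colon e_{\star}^{(i)}C_{\star}(I,M)\rightarrow e_{\star}^{(i)}C_{\star}(A,M)$, composed with $f_i\circ Q_i$ on the left.

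Concretely, I would first replace the quotient $\frac{e_{\star}^{(i)}C_{\star}(A,M)}{e_{\star}^{(i)}D_{\star}(A,M)}$ with $e_{\star}^{(i)}C_{\star}(I,M)$ via $f_i$, obtaining the equivalent short exact sequence
\[0\rightarrow e_{\star}^{(i)}D_{\star}(A,M)\rightarrow e_{\star}^{(i)}C_{\star}(A,M)\xrightarrow{f_i\circ Q_i} e_{\star}^{(i)}C_{\star}(I,M)\rightarrow 0.\]
Next I would verify that $(f_i\circ Q_i)\circ I_i$ is the identity on $e_{\star}^{(i)}C_{\star}(I,M)$. This is a direct computation on generators: take $e_{\star}^{(i)}(m\otimes y_1\otimes\cdots\otimes y_n)$ with each $y_j\in I$; the inclusion $I_i$ leaves it unchanged as an element of $e_{\star}^{(i)}C_{\star}(A,M)$; the quotient $Q_i$ sends it to $\left[e_{\star}^{(i)}(m\otimes y_1\otimes\cdots\otimes y_n)\right]$; and by the explicit formula for $f_i$ given in the lemma, $f_i$ sends this equivalence class back to $e_{\star}^{(i)}(m\otimes y_1\otimes\cdots\otimes y_n)$. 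Hence $I_i$ is a chain-level section.

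With a section in hand, the short exact sequence of chain complexes splits in the category of chain complexes of $k$-modules, and standard homological algebra gives the direct sum decomposition
\[e_{\star}^{(i)}C_{\star}(A,M)\cong e_{\star}^{(i)}C_{\star}(I,M)\oplus e_{\star}^{(i)}D_{\star}(A,M),\]
with the splitting realized explicitly by $I_i$ on one summand and the inclusion of the degenerate subcomplex on the other. There is essentially no obstacle here: the only non-formal content is the compatibility of the choice of representatives used to define $f_i$ with the inclusion $I_i$, and this was precisely arranged in the statement of the preceding lemma. The whole argument is really a bookkeeping consequence of the fact that $I_i$ picks out, on the nose, the representatives chosen in the construction of $f_i$.
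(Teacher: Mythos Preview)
Your proposal is correct and follows essentially the same route as the paper: the paper's proof simply checks that $f_i\circ Q_i\circ I_i$ is the identity on $e_{\star}^{(i)}C_{\star}(I,M)$ and then notes that $\mathrm{Ker}(f_i\circ Q_i)\cong e_{\star}^{(i)}D_{\star}(A,M)$, which is exactly the section argument you spell out in more detail.
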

\begin{proof}
One easily checks that the composite $f_i\circ Q_i\circ I_i$ is the identity map on the chain complex
$e_{\star}^{(i)}C_{\star}(I,M)$. The theorem then follows upon observing that $\mr{Ker}(f_i\circ Q_i) \cong e_{\star}^{(i)}D_{\star}(A,M)$.
\end{proof}

\begin{cor}
\label{HarrCor}
Taking $i=1$ in Theorem \ref{Harr-Thm}, there is an isomorphism of $k$-modules
\[ H_{n}\left(e_{\star}^{(1)}C_{\star}(I,M)\right)\cong Harr_{n}\left(A,M\right)\]
for each $n\geqslant 0$. \QEDA
\end{cor}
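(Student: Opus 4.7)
The plan is to derive the corollary as a direct consequence of Theorem \ref{Harr-Thm} at $i=1$, combined with two facts recorded earlier in the section: Barr's identification of $e_{\star}^{(1)}C_{\star}(A,M)$ with the Harrison complex, and the acyclicity of the $i=1$ summand of the degenerate subcomplex.

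First I would apply Theorem \ref{Harr-Thm} with $i=1$ to obtain the isomorphism of chain complexes
\[e_{\star}^{(1)}C_{\star}(A,M)\cong e_{\star}^{(1)}C_{\star}(I,M)\oplus e_{\star}^{(1)}D_{\star}(A,M).\]
Since homology commutes with finite direct sums of chain complexes, passing to $H_n$ gives
\[H_n\!\left(e_{\star}^{(1)}C_{\star}(A,M)\right)\cong H_n\!\left(e_{\star}^{(1)}C_{\star}(I,M)\right)\oplus H_n\!\left(e_{\star}^{(1)}D_{\star}(A,M)\right).\]

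Next I would identify the two outer summands. Barr's proposition gives a natural isomorphism of chain complexes $e_{\star}^{(1)}C_{\star}(A,M)\cong CHarr_{\star}(A,M)$, so the left-hand side is $Harr_n(A,M)$ by definition. For the right-hand summand, the degenerate subcomplex $D_{\star}(A,M)$ is acyclic, and the Eulerian idempotent decomposition $D_{\star}(A,M)\cong\bigoplus_{i\geqslant 1} e_{\star}^{(i)}D_{\star}(A,M)$ (noted just before the statement of Theorem \ref{Harr-Thm}) forces each summand, and in particular $e_{\star}^{(1)}D_{\star}(A,M)$, to be acyclic as well. Thus $H_n(e_{\star}^{(1)}D_{\star}(A,M))=0$.

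Combining the displayed isomorphism with these two identifications yields $Harr_n(A,M)\cong H_n(e_{\star}^{(1)}C_{\star}(I,M))$, as required. There is no real obstacle here since all the substantive work has already been done in Theorem \ref{Harr-Thm}; the only thing to double-check is the standing hypotheses (namely $k\supseteq\mathbb{Q}$, $A$ augmented commutative, and $M$ a flat symmetric $A$-bimodule), which are exactly the assumptions under which Theorem \ref{Harr-Thm} and the Hodge-type decomposition of Theorem \ref{Hodge} were invoked.
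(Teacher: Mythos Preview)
Your proposal is correct and matches the paper's approach: the paper marks this corollary with a bare \QEDA, treating it as immediate from Theorem~\ref{Harr-Thm}, Barr's identification $e_{\star}^{(1)}C_{\star}(A,M)\cong CHarr_{\star}(A,M)$, and the acyclicity of $e_{\star}^{(1)}D_{\star}(A,M)$ established just before. You have simply made these implicit steps explicit.
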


\begin{defn}
Let $k\supseteq \mathbb{Q}$. Let $A$ be an augmented, commutative $k$-algebra with augmentation ideal $I$ and let $M$ be a symmetric $A$-bimodule which is flat over $k$. We call the chain complex $e_{\star}^{(1)}C_{\star}(I,M)$ the \emph{normalized Harrison complex}.
\end{defn}

\section{Gamma Homology}
\label{gamma-section}
We recall the definition of $\G$-homology for a commutative $k$-algebra in the functor homology setting. We recall the Robinson-Whitehouse complex, whose homology is $\G$-homology. In the case of an augmented, commutative $k$-algebra we define a splitting of the Robinson-Whitehouse complex by defining the \emph{pruning map}. Furthermore, we utilize the normalized Harrison homology of Subsection \ref{norm-Harr-chapter} to prove that when $k\supseteq \mathbb{Q}$ and $A$ is a flat, augmented, commutative $k$-algebra we can compute $\G$-homology using only the augmentation ideal. 

\subsection{The categories $\G$ and $\Omega$}
\begin{defn}
Let $\Gamma$ denote the category whose objects are the finite based sets $[n]=\lbrace \ul{0}, 1, \dotsc ,n\rbrace$, for $n\geqslant 0$, where $\ul{0}$ denotes that the set is based at $0$. The morphisms are basepoint-preserving maps of sets.
\end{defn}

\begin{defn}
Let $\Omega$ denote the category whose objects are the finite sets $\ul{n}=\lbrace 1,\dotsc ,n\rbrace$ for $n\geqslant 1$ and whose morphisms are surjections of sets.
\end{defn}

\subsection{Gamma homology as functor homology}
\begin{defn}
Let $t$ denote the right $\Gamma$-module $\mr{Hom}_{\mathbf{Set_{\star}}}\left(-,k\right)$, where the commutative ring $k$ is considered to be a based set with basepoint $0$. The functor $t$ is sometimes referred to as the \emph{based $k$-cochain functor} \cite[Section 3.4]{RobEIO}.
\end{defn}

\begin{defn}
Let $F$ be a left $\G$-module. We define the \emph{$\G$-homology of $F$} by
\[ H\G_{\star}(F)\coloneqq\mr{Tor}_{\star}^{\G}\left(t,F\right).\]
\end{defn}

\begin{defn}
\label{gamma-loday-defn}
Let $A$ be a commutative $k$-algebra and let $M$ be a symmetric $A$-bimodule. We define the \emph{Loday functor} $\lodam (-)\colon \G \rightarrow \kmod$ on objects by
\[\lodam \left([n]\right)=A^{\otimes n}\otimes M.\]
For an element $f\in \mr{Hom}_{\G}\left([p],[q]\right)$, $\lodam(f)$ is determined by
\[\left(a_1\otimes\cdots\otimes a_p\otimes m\right)\mapsto \left(\prod_{i\in f^{-1}(1)} a_i\right) \otimes \cdots \otimes \left(\prod_{i\in f^{-1}(q)}a_i\right) \otimes \left(\prod_{i\in f^{-1}(0)}a_i\right)m\]
where an empty product is understood to be $1_A\in A$.
\end{defn}

\begin{defn}
Let $A$ be a commutative $k$-algebra and let $M$ be a symmetric $A$-bimodule. We define the \emph{$\G$-homology of $A$ with coefficients in $M$} by
\[H\G_{\star}\left(A,M\right)\coloneqq H\G_{\star}\left(\lodam\right).\]
\end{defn}

\subsection{The Robinson-Whitehouse complex}
The Robinson-Whitehouse complex for commutative algebras was first defined in the thesis of Sarah Whitehouse \cite[Definition \RNum{2}.4.1]{Whi94}. Pirashvili and Richter provided a more general construction for all left $\G$-modules \cite[Section 2]{Pir00}. 

\begin{defn}
Let $N\Omega_n\left(\ul{x},\ul{1}\right)$ denote the set of strings of composable morphisms of length $n$ in the category $\Omega$ whose initial domain is the set $\ul{x}$ and whose final codomain is the set $\ul{1}$. An element
\[ \ul{x}\xra{f_1}\ul{x_1}\xra{f_2}\cdots \xra{f_{n-1}}\ul{x_{n-1}}\xra{f_n} \ul{1}\]
of $N\Omega_n\left(\ul{x},\ul{1}\right)$ will be denoted $\left[ f_n\mid\cdots \mid f_1\right]$.
\end{defn}

\begin{rem}
Observe that a morphism $f\in \mr{Hom}_{\Omega}\left(\ul{x},\ul{x_1}\right)$ induces a map $f_{\star}\colon A^{\otimes x}\rightarrow A^{\otimes x_1}$ defined by permuting and multiplying the tensor factors.
\end{rem}

\begin{defn}
\label{i-comp-defn}
Let $\left[f_n\mid \cdots \mid f_1\right]$ be an element of $N\Omega_n\left(\ul{x},\ul{1}\right)$. We define the \emph{$i^{th}$ component} of $\left[f_n\mid \cdots \mid f_1\right]$, denoted $\left[f_{n-1}^i\mid \cdots \mid f_1^i\right]$ to be the string of morphisms corresponding to the preimage of $i\in \ul{x_{n-1}}$, re-indexed such that the domain of the $j^{th}$ morphism is the set $\ul{c}$ where $c$ is the cardinality of the set $f_j^{-1}\dotsc f_{n-1}^{-1}(i)$ and the final codomain is $\ul{1}$.
\end{defn}

\begin{defn}
Let $A$ be a commutative $k$-algebra and let $M$ be a symmetric $A$-bimodule. We define the simplicial $k$-module $C\G_{\star}(A,M)$ as follows. Let $C\G_0(A,M)=A\otimes M$ and let
\[C\G_n(A,M)\coloneqq \bigoplus_{x\geqslant 1} k\left[N\Omega_n\left( \ul{x},\ul{1}\right)\right] \otimes A^{\otimes x} \otimes M.\]

Let $X=\left[f_n\mid \cdots \mid f_1\right] \otimes \left(a_1\otimes \cdots \otimes a_n\right)\otimes m$ be a generator of $C\G_n(A,M)$.

We define the face maps $\p_i\colon C\G_n(A,M)\rightarrow C\G_{n-1}(A,M)$ to be determined by
\[\p_0\left(X\right)= \left[f_n\mid \cdots \mid f_2\right] \otimes f_{1\star}\left(a_1\otimes \cdots \otimes a_n\right)\otimes m,\]
\[\p_i\left(X\right)= \left[f_n\mid \cdots \mid f_{i+1}\circ f_i\mid \cdots \mid f_1\right] \otimes \left(a_1\otimes \cdots \otimes a_n\right)\otimes m\]
 for $1\leqslant i\leqslant n-1$ and
\[\p_n\left(X\right)= \sum_{i\in \ul{x_{n-1}}} \left[f_{n-1}^i\mid \cdots \mid f_1^i\right] \otimes \left(a_{i_1}\otimes \cdots \otimes a_{i_k}\right) \otimes\left( \prod_{j\not\in f_1^{-1}\cdots f_{n-1}^{-1}(i)} a_j\right)m, \]
where $\llb i_1,\dotsc , i_k\rrb$ is the ordered preimage of $i\in \ul{x_{n-1}}$ under $f_{n-1}\circ\cdots \circ f_1$.
The degeneracy maps insert identity morphisms into the string. By abuse of notation we will also denote the associated chain complex by $C\G_{\star}(A,M)$.
\end{defn}

\begin{rem}
In the case where $A$ is an augmented, commutative $k$-algebra, $C\G_n(A,M)$ is generated by elements of the form $\left[f_n\mid \cdots \mid f_1\right] \otimes \left(a_1\otimes \cdots \otimes a_n\right)\otimes m$ where $a_1\otimes \cdots \otimes a_n$ is a basic tensor in the sense of Definition \ref{basic-tens-defn}.
\end{rem}

\begin{defn}
\label{gamma-aug-cpx}
Let $A$ be an augmented $k$-algebra with augmentation ideal $I$ and let $M$ be a symmetric $A$-bimodule. We define the chain complex $C\G_{\star}(I,M)$ in degree $n$ by
\[C\G_n(I,M)\coloneqq \bigoplus_{x\geqslant 1} k\left[ N\Omega_n\left(\ul{x},\ul{1}\right)\right] \otimes I^{\otimes x}\otimes M\]
with the boundary map induced by that of $C\G_{\star}\left(A,M\right)$.
\end{defn}

\subsection{Pruning map}
Let $A$ be an augmented, commutative $k$-algebra with augmentation ideal $I$ and let $M$ be a symmetric $A$-bimodule which is flat over $k$. We will demonstrate that the chain complex $C\G_{\star}(I,M)$ is a direct summand of the chain complex $C\G_{\star}\left(A,M\right)$. We do so by providing a splitting map called the \emph{pruning map}.

\begin{defn}
Let $\left[f_n\mid\cdots \mid f_1\right] \otimes \left(a_1\otimes \cdots \otimes a_{x}\right)\otimes m$
be a generator of $C\G_n\left(A,M\right)$ such that $\left(a_1\otimes \cdots \otimes a_{x}\right)\otimes m$ is a basic tensor in the sense of Definition \ref{basic-tens-defn}.

Let $L=\llb l_1,\dotsc , l_h\rrb$ be the set such that $a_i\in I$ if and only if $i\in L$. 
Let
\[m_i\coloneqq \mr{Im}\left( \left(f_i\circ\cdots \circ f_1\right){\big|}_L\right).\]
Let $\widetilde{f_1}\colon \ul{\llv L\rrv}\rightarrow \ul{\llv m_1 \rrv}$
denote the map obtained from $f_1$ by restricting the domain to the set $L$, restricting the codomain to $m_1$ and re-indexing both domain and codomain in the canonical way.

For $2\leqslant i\leqslant n$ let $\widetilde{f_i}\colon \ul{\llv m_{i-1}\rrv}\rightarrow \ul{\llv m_i \rrv}$
denote the map obtained from $f_i$ by restricting the domain to the set $m_{i-1}$, restricting the codomain to the set $m_i$ and re-indexing both domain and codomain in the canonical way.
\end{defn}

\begin{rem}
Intuitively, the pruning map removes the trivial tensor factors from the basic tensor $\left(a_1\otimes \cdots \otimes a_x\right)\in A^{\otimes x}$ and prunes the graph in order to preserve the permutations and multiplications of the non-trivial tensor factors.
\end{rem}

\begin{defn}
\label{prune-defn}
Let $A$ be an augmented, commutative $k$-algebra with augmentation ideal $I$. Let $M$ be a symmetric $A$-bimodule which is flat over $k$. We define the \emph{pruning map}
\[P_{\star}\colon C\G_{\star}\left(A,M\right)\rightarrow C\G_{\star}(I,M)\]
to be the $k$-linear map of chain complexes determined in degree $n$ by
\[\left[f_n\mid\cdots \mid f_1\right] \otimes \left(a_1\otimes \cdots \otimes a_{x}\right)\otimes m \mapsto \left[\widetilde{f_n}\mid\cdots \mid \widetilde{f_1}\right] \otimes \left(a_{l_1}\otimes \cdots \otimes a_{l_h}\right)\otimes m.\]
\end{defn}

Further details on the pruning map including examples and a proof that it is well-defined map of chain complexes can be found in the author's thesis \cite[18.2, App. A]{DMG}.

\begin{thm}
\label{prune-split}
Let $A$ be an augmented, commutative $k$-algebra with augmentation ideal $I$. Let $M$ be a symmetric $A$-bimodule which is flat over $k$. Let 
\[i\colon C\G_{\star}\left(I,M\right)\rightarrow C\G_{\star}\left(A,M\right)\] denote the inclusion of the subcomplex. The composite
\[P_{\star}\circ i\colon C\G_{\star}(I,M)\rightarrow  C\G_{\star}(I,M)\]
is the identity map.
\end{thm}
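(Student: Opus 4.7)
The plan is to verify the identity directly on generators by unpacking the definitions of the inclusion $i$ and the pruning map $P_\star$. The key observation is that when we start in $C\G_\star(I,M)$, every tensor factor is non-trivial, so the pruning operation removes nothing and the associated morphisms are unchanged up to the canonical re-indexing.

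More precisely, let $X = [f_n \mid \cdots \mid f_1] \otimes (a_1 \otimes \cdots \otimes a_x) \otimes m$ be a generator of $C\G_n(I,M)$, so that each $a_i \in I$. Applying the inclusion simply views $X$ as a generator of $C\G_n(A,M)$ whose underlying tensor is basic with every tensor factor non-trivial. In the notation of the pruning map, we therefore have $L = \{1, 2, \dotsc, x\} = \ul{x}$. First I would observe that, since each $f_j$ is a surjection in $\Omega$, we have
\[ m_j = \mr{Im}\bigl((f_j \circ \cdots \circ f_1)\big|_L\bigr) = \mr{Im}(f_j \circ \cdots \circ f_1) = \ul{x_j}. \]
Consequently the pruned morphism $\widetilde{f_1}\colon \ul{\lvert L\rvert} \to \ul{\lvert m_1\rvert}$ has domain $\ul{x}$ and codomain $\ul{x_1}$, and the canonical re-indexings on both sides are the identity, so $\widetilde{f_1} = f_1$. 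The analogous argument for $2 \leqslant j \leqslant n$ shows $\widetilde{f_j} = f_j$.

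Next I would check the effect on the tensor factors. Since $L = \{1, \dotsc, x\}$, the indexing $l_1 < \cdots < l_h$ is simply $1 < \cdots < x$, and hence
\[ (a_{l_1} \otimes \cdots \otimes a_{l_h}) \otimes m = (a_1 \otimes \cdots \otimes a_x) \otimes m. \]
Combining these, $P_n(i(X)) = [\widetilde{f_n} \mid \cdots \mid \widetilde{f_1}] \otimes (a_{l_1}\otimes\cdots\otimes a_{l_h}) \otimes m = X$, and extending $k$-linearly completes the proof.

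There is essentially no real obstacle here; the whole content is definitional, and the only step that requires a moment's care is the identification of the canonical re-indexings as identity maps, which follows because the cardinality counts match on both sides at every stage when all inputs are non-trivial.
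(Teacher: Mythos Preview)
Your proof is correct and follows the same approach as the paper's: both observe that a generator in the image of $i$ has no trivial tensor factors, so the pruning map acts as the identity on it. Your argument is simply a more detailed unpacking of what the paper dispatches with the phrase ``by construction''.
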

\begin{proof}
An element in the image of $i$ is a $k$-linear combination of generators of the form 
\[\left[f_n\mid\cdots \mid f_1\right] \otimes \left(y_1\otimes \cdots \otimes y_{x}\right)\otimes m\]
such that $\left(y_1\otimes \cdots \otimes y_{x}\right) \in I^{\otimes x}$. In particular, $\left(y_1\otimes \cdots \otimes y_{x}\right)$ contains no trivial factors. By construction, the pruning map $P_{\star}$ is the identity on such elements.
\end{proof}

\begin{cor}
Under the conditions of Theorem \ref{prune-split}, there is an isomorphism of chain complexes
\[C\G_{\star}\left(A,M\right)\cong C\G_{\star}(I,M)\oplus \mr{Ker}\left(P_{\star}\right) \] 
and hence there is an isomorphism of $k$-modules
\[H\G_{n}\left(A,M\right)\cong H\G_{n}(I,M)\oplus H_{n}\left(\mr{Ker}(P_{\star})\right)\]
for each $n\geqslant 0$. \QEDA
\end{cor}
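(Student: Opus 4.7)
The statement is a formal consequence of Theorem \ref{prune-split}, which tells us that $i$ is a section of $P_{\star}$, so $P_{\star}$ is a split epimorphism of chain complexes. The plan is to extract the splitting in the usual way and then pass to homology.

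First I would define a map $\pi_{\star}\colon C\G_{\star}(A,M)\to \mr{Ker}(P_{\star})$ by $\pi_{\star}(x) = x - i(P_{\star}(x))$. A one-line computation using $P_{\star}\circ i = \mathrm{id}$, which holds by Theorem \ref{prune-split}, shows $P_{\star}(\pi_{\star}(x)) = P_{\star}(x) - P_{\star}(x) = 0$, so $\pi_{\star}$ indeed lands in $\mr{Ker}(P_{\star})$. Since both $i$ and $P_{\star}$ are maps of chain complexes, $\pi_{\star}$ is too, and $\mr{Ker}(P_{\star})$ is a genuine subcomplex.

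Next I would verify that the pair $(P_{\star}, \pi_{\star})$ assembles into an isomorphism of chain complexes
\[ \Phi\colon C\G_{\star}(A,M) \xrightarrow{\ \cong\ } C\G_{\star}(I,M) \oplus \mr{Ker}(P_{\star}), \qquad \Phi(x) = \bigl(P_{\star}(x),\, \pi_{\star}(x)\bigr),\]
with explicit two-sided inverse $(y,z)\mapsto i(y) + z$. That this inverse is well-defined uses $P_{\star}\circ i = \mathrm{id}$ again, and the two composite checks $\Phi\circ\Phi^{-1} = \mathrm{id}$ and $\Phi^{-1}\circ \Phi = \mathrm{id}$ each reduce to substitution in the defining formula for $\pi_{\star}$. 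Both $\Phi$ and $\Phi^{-1}$ commute with the differentials since their components do, giving the claimed isomorphism of chain complexes.

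The second isomorphism of the corollary then follows formally: homology commutes with finite direct sums of chain complexes of $k$-modules, so applying $H_n(-)$ to the isomorphism of chain complexes yields
\[ H\G_n(A,M) \;\cong\; H_n\bigl(C\G_{\star}(I,M)\bigr) \oplus H_n\bigl(\mr{Ker}(P_{\star})\bigr) \;=\; H\G_n(I,M) \oplus H_n\bigl(\mr{Ker}(P_{\star})\bigr)\]
for each $n\geqslant 0$. There is no real obstacle here; the only substantive content is already contained in Theorem \ref{prune-split}, and the remainder is the standard observation that a split epimorphism of chain complexes induces a direct sum decomposition that descends to homology.
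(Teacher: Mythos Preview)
Your proposal is correct and is precisely the standard splitting argument that the paper leaves implicit (the corollary is stated with a bare \QEDA, i.e.\ as an immediate consequence of Theorem~\ref{prune-split}). You have simply written out the routine verification that a retraction $P_{\star}\circ i=\mathrm{id}$ yields the direct sum decomposition $C\G_{\star}(A,M)\cong C\G_{\star}(I,M)\oplus \mr{Ker}(P_{\star})$ and then passed to homology; nothing more is needed.
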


\begin{thm}
\label{gamma-iso-thm}
Let $k\supseteq \mathbb{Q}$. Let $A$ be a flat, augmented, commutative $k$-algebra with augmentation ideal $I$ and let $M$ be a symmetric $A$-bimodule. Then there is an isomorphism of graded $k$-modules
\[H\G_{\star}\left(I, M\right) \cong H\G_{\star}\left( A , M\right).\]
\end{thm}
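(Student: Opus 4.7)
The plan is to combine the splitting supplied by the corollary immediately preceding the theorem with the classical identification of gamma homology and Harrison homology in characteristic zero. The preceding corollary gives
\[
H\G_n(A,M)\cong H\G_n(I,M)\oplus H_n\bigl(\mr{Ker}(P_{\star})\bigr),
\]
so the theorem reduces to showing that $H_n(\mr{Ker}(P_{\star}))=0$ for every $n\geq 0$ under the added hypotheses $k\supseteq\mathbb{Q}$ and $A$ flat over $k$; equivalently, that the inclusion $i\colon C\G_{\star}(I,M)\hookrightarrow C\G_{\star}(A,M)$ is a quasi-isomorphism.

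My approach is to compute both $H\G_{\star}(A,M)$ and $H\G_{\star}(I,M)$ via Harrison homology. When $k\supseteq\mathbb{Q}$, the Pirashvili--Richter identification produces a degree-shifted isomorphism $H\G_n(A,M)\cong Harr_{n+1}(A,M)$ for commutative $A$. Theorem \ref{Harr-Thm} and Corollary \ref{HarrCor} then express the right-hand side as the homology of the normalized Harrison complex $e_{\star}^{(1)}C_{\star}(I,M)$, which depends only on the augmentation ideal. Flatness of $A$ ensures that $I$ is flat over $k$, via the splitting $A\cong I\oplus k$ of $k$-modules, so that the Hochschild-type and Eulerian-idempotent constructions on $I$ behave well and the relevant tensor products preserve exactness; this is how the flatness assumption on $A$ is meant to compensate for the absence of flatness of $M$ in Corollary \ref{HarrCor}. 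The upshot is
\[
H\G_n(A,M)\cong H_{n+1}\bigl(e_{\star}^{(1)}C_{\star}(I,M)\bigr).
\]

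The main obstacle, and the step that requires the most care, is to obtain the corresponding identification for $H\G_{\star}(I,M)$, matching it with the \emph{same} Harrison-type homology of the augmentation ideal. To do this I would reproduce the Pirashvili--Richter comparison argument for the Robinson--Whitehouse complex $C\G_{\star}(I,M)$, showing that it yields an isomorphism onto $H_{\star+1}(e_{\star}^{(1)}C_{\star}(I,M))$. The delicate point is the compatibility of the two comparison isomorphisms with the inclusion $i$: one must check they fit into a commutative diagram so that the resulting isomorphism $H\G_{\star}(I,M)\cong H\G_{\star}(A,M)$ is induced by $H_{\star}(i)$ itself. Granting this, the direct-sum decomposition from the preceding corollary forces $H_{\star}(\mr{Ker}(P_{\star}))=0$, which completes the proof.
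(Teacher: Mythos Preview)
Your core strategy matches the paper's: identify both $H\G_{\star}(A,M)$ and $H\G_{\star}(I,M)$ with the homology of the normalized Harrison complex $e_{\star}^{(1)}C_{\star}(I,M)$, the first via Whitehouse's (equivalently Pirashvili--Richter's) theorem together with Corollary~\ref{HarrCor}, the second by rerunning the same comparison argument on the $I$-version of the Robinson--Whitehouse complex. The paper's proof is exactly this, in three sentences.

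Where you diverge is in the framing. You open by reducing the theorem to the acyclicity of $\mr{Ker}(P_{\star})$, equivalently to $i$ being a quasi-isomorphism, and this is what forces you into the ``delicate point'' of checking that the two Harrison identifications are compatible with $i$. But the theorem only asserts an \emph{abstract} isomorphism of graded $k$-modules, and that is all the paper proves: once both sides are shown isomorphic to $Harr_{\star+1}(A,M)$, the argument is finished, with no naturality or compatibility check needed. Note also that the abstract isomorphism does \emph{not} by itself force $H_{\star}(\mr{Ker}(P_{\star}))=0$ --- an infinite-rank $k$-module can be isomorphic to a proper direct summand of itself --- so your ``the theorem reduces to'' is a genuine strengthening, not an equivalent reformulation. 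Drop that detour and your argument is the paper's.
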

\begin{proof}
Whitehouse \cite[Theorem \RNum{3}.4.2]{Whi94} proves that under these conditions there is an isomorphism
\[H\G_{n-1}\left(A,M\right) \cong Harr_n\left(A,M\right)\]
for $n\geqslant 1$. One may check that the same method yields an isomorphism between the $n^{th}$ homology of the complex $e_{\star}^{(1)}C_{\star}(I,M)$ and $H\G_{n-1}\left(I,M\right)$. The theorem then follows from Corollary \ref{HarrCor}.
\end{proof}

\section{Symmetric Homology}
\label{symm-hom-section}
We recall the definition of the category $\DS$ and the symmetric bar construction. We recall the definition of reduced symmetric homology and the chain complex that computes it. The material in this section can be found in \cite{Ault}.

\subsection{The category $\DS$}
In order to best define the comparison map we provide a description of the category $\DS$ such that it has the same objects as the category $\Omega$. Our definition is isomorphic to the definition found in \cite{FL} and \cite{Ault}, the isomorphism being given by shifting the index.

\begin{defn}
The category $\Delta$ has as objects the sets $\ul{n}=\llb 1,\dotsc , n\rrb$ for $n\geqslant 1$ and order-preserving maps as morphisms.
\end{defn}

\begin{rem}
Morphisms in $\Delta$ are generated by the \emph{face maps} $\delta_i \in \mr{Hom}_{\Delta}\left(\ul{n},\ul{n+1}\right)$ and the \emph{degeneracy maps} $\sigma_j\in \mr{Hom}_{\Delta}\left(\ul{n+1}, \ul{n}\right)$ for $n\geqslant 1$, $1\leqslant i\leqslant n+1$ and $1\leqslant j\leqslant n$ subject to the usual relations, see for example \cite[Appendix B]{Lod}.
\end{rem}

\begin{defn}
The category $\DS$ has as objects the sets $\ul{n}=\llb 1,\dotsc , n\rrb$ for $n\geqslant 1$. An element of $\mr{Hom}_{\DS}\left(\ul{n},\ul{m}\right)$ is a pair $\left(\phi , g\right)$ where $g \in \Sigma_n$ and $\phi\in \mr{Hom}_{\Delta}\left(\ul{n},\ul{m}\right)$.

For $\left(\phi,g\right)\in \mr{Hom}_{\DS}\left(\ul{n},\ul{m}\right)$ and $\left(\psi , h\right) \in \mr{Hom}_{\DS}\left(\ul{m},\ul{l}\right)$ the composite is the pair 
\[\left(\psi\circ h_{\star}(\phi), \phi^{\star}(h)\circ g\right)\]
where $h_{\star}(\phi)$ and $\phi^{\star}(h)$ are determined by the relations $h_{\star}(\delta_i) =\delta_{h(i)}$, $h_{\star}(\sigma_j) =\sigma_{h(j)}$ and
\begin{alignat*}{2}
    & \begin{aligned} &  \delta_i^{\star}\left(\theta_k\right)=\begin{cases}
  \theta_k & k< i-1\\
  id_{\ul{n-1}} & k=i-1\\
  id_{\ul{n-1}} & k=i\\
  \theta_{k-1} & k>i\\
  \end{cases}\\
  \end{aligned}
    & \hskip 6em &
  \begin{aligned}
  & \sigma_j^{\star}(\theta_k)= \begin{cases}
  \theta_k & k<j-1\\
  \theta_j\theta_{j-1} & k=j-1\\
  \theta_j\theta_{j+1} & k=j\\
  \theta_{k+1} & k>j\\
  \end{cases} 
  \end{aligned}
\end{alignat*}
where the $\delta_i$ and $\sigma_j$ are the face and degeneracy maps of the category $\Delta$ and the $\theta_k$ are transpositions.
\end{defn}

\begin{rem}
The category $\DS$ is isomorphic to the category of non-commutative sets as found in \cite[A10]{FT} and \cite[Section 1.2]{Pir02}.
\end{rem}

\begin{rem}
A morphism $\left( \phi , g\right)$ in $\DS$ is an epimorphism if and only if the morphism $\phi$ in $\Delta$ is an epimorphism.
\end{rem}

\begin{defn}
Let $\EDS$ denote the subcategory of $\DS$ whose objects are the sets $\ul{n}=\llb 1\dotsc , n\rrb$ for $n\geqslant 1$ and whose morphisms are the epimorphisms of $\DS$.
\end{defn}

\subsection{Symmetric homology}
\label{symm-bar-section}

\begin{defn}
Let $A$ be an associative $k$-algebra. We define the \emph{symmetric bar construction}
\[\SB\colon \DS\rightarrow \kmod\]
to be the functor defined on objects by $\SB\left(\ul{n}\right)=A^{\otimes n}$. 
For a morphism $\left(\phi , g\right)\in \mr{Hom}_{\DS}\left(\ul{n},\ul{m}\right)$, $\SB\left(\phi , g\right)$ is determined by 
\[a_1\otimes \cdots \otimes a_n \mapsto \left( \underset{_{i\in \varphi^{-1}(1)}}{{\prod}^{<}} a_{g^{-1}(i)}\right)\otimes \cdots \otimes \left(\underset{_{i\in \varphi^{-1}(m)}}{{\prod}^{<}} a_{g^{-1}(i)} \right)\]
where the product is ordered according to the morphism $\phi$. An empty product is defined to be the multiplicative unit $1_A$.
\end{defn}

\begin{defn}
Let $A$ be an augmented, associative $k$-algebra with augmentation ideal $I$. We define the functor
\[\SBI\colon \EDS\rightarrow \kmod\]
to be defined on objects by $\SBI\left(\ul{n}\right) = I^{\otimes n}$.
For a morphism $\left(\phi , g\right)\in \mr{Hom}_{\DS}\left(\ul{n},\ul{m}\right)$ we define $\SBI\left(\phi , g\right)= \SB\left(\phi , g\right)$.
\end{defn}

\begin{defn}
Let $A$ be an associative $k$-algebra. For $n\geqslant 0$, we define the \emph{$n^{th}$ symmetric homology of $A$} to be
\[HS_{n}(A)\coloneqq \mr{Tor}_{n}^{\DS}\left(k^{\star}, \SB\right),\]
where $k^{\star}$ is the trivial right $\DS$-module of Definition \ref{triv-c-mod}.
\end{defn}

By \cite[Section 6]{Ault} we can make the following definition.

\begin{defn}
\label{reduced-symm}
Let $A$ be an augmented associative $k$-algebra with augmentation ideal $I$. We define the \emph{reduced symmetric homology of $A$}, denoted $\widetilde{HS}_{\star}(A)$ to be the homology of the chain complex $CS_{\star}(I)\coloneqq C_{\star}\left(\EDS , \SBI\right)$.
\end{defn}

\begin{rem}
By \cite[Section 6.3]{Ault}, the reduced symmetric homology of the augmented $k$-algebra $A$ coincides with the symmetric homology of $A$ in non-negative degrees. Furthermore,
\[HS_0(A)\cong \widetilde{HS}_0(A) \oplus k.\]
\end{rem}

\section{Comparison Map}
\label{CM-section}
Let $A$ be an augmented, commutative $k$-algebra with augmentation ideal $I$. We define a surjective map of chain complexes
\[NCS_{\star}(I)\rightarrow NC\Gamma_{\star}(I,k)\]
between the normalized chain complex of $CS_{\star}(I)$ from Definition \ref{reduced-symm} and the normalized chain complex of $C\Gamma_{\star}(I,k)$ from Definition \ref{gamma-aug-cpx}. We therefore obtain a long exact sequence in homology connecting the reduced symmetric homology of $A$ with a summand of the $\G$-homology of $A$. By Theorem \ref{gamma-iso-thm} we obtain a long exact sequence in homology connecting the reduced symmetric homology with the entire $\G$-homology when $k\supseteq \mathbb{Q}$ and $A$ is flat over $k$.

\subsection{A quotient of the symmetric complex}
Recall the chain complex $CS_{\star}(I)$ of Definition \ref{reduced-symm}. This is a chain complex associated to a simplicial $k$-module so we can form the normalized chain complex $NCS_{\star}(I)$. In degree $n$ we have the $k$-module generated by equivalence classes of the form $\left[ (f_n,\dotsc , f_1),(y_1\otimes \cdots \otimes y_{x})\right]$ where
\[ \ul{x}\xra{f_1} \cdots  \xra{f_n} \ul{x_n}\]
is a string of non-identity morphisms $N_n(\EDS)$ and $y_1\otimes\cdots \otimes y_{x}\in \SBI\left(\ul{x}\right)$. The boundary map is given by the alternating sum of face maps described in Definition \ref{GabZisCpx}.

\begin{defn}
\label{quot-cpx-defn}
Denote by $NCS_{\star}^1(I)$ the subcomplex of $NCS_{\star}(I)$ generated by the equivalence classes for which $\ul{x_n}\neq \ul{1}$. Let
\[q\colon NCS_{\star}(I)\rightarrow \ol{NCS_{\star}(I)} \coloneqq \frac{NCS_{\star}(I)}{NCS_{\star}^1(I)}\]
denote the quotient map.
\end{defn}

\begin{rem}
Observe that this well-defined. The only face map that affects the final codomain in the string is the last face map, which omits the last morphism in the string. Since this last morphism was a non-identity surjection and $x_n>1$, we see $x_{n-1}>1$.
\end{rem}

The $k$-module $\ol{NCS_n(I)}$ is generated by equivalence classes $\left[ (f_n,\dotsc ,f_1),(y_1\otimes\cdots \otimes y_{x})\right]$
where $(f_n,\dotsc , f_1)$ denotes a string of non-identity morphisms 
\[ \ul{x}\xra{f_1}\cdots \xra{f_{n-1}}\ul{x_{n-1}}\xra{f_n} \ul{1}\]
in $N_n(\EDS)$ and $(y_0\otimes\cdots \otimes y_{x})\in \SBI(\ul{x})$.

\subsection{Mapping to the Robinson-Whitehouse complex}
Recall the chain complex $C\G_{\star}(I,k)$ from Definition \ref{gamma-aug-cpx}. Since $C\G_{\star}(I,k)$ is the chain complex associated to a simplicial $k$-module we can take the normalized complex, $NC\G_{\star}(I,k)$. A generator of the $k$-module $NC\G_n(I,k)$ is an equivalence class of the form
\[\left[\left[f_n\mid f_{n-1}\mid \cdots \mid f_1\right] \otimes (y_1\otimes\cdots \otimes y_{x})\otimes 1_k\right],\]
where $\left[f_n\mid f_{n-1}\mid \cdots \mid f_1\right]$ denotes a string of non-identity morphisms 
\[ \ul{x}\xra{f_1}\cdots \xra{f_{n-1}}\ul{x_{n-1}}\xra{f_n} \ul{1}\]
in $N_n\Omega$ and $(y_1\otimes\cdots \otimes y_{x})\in I^{\otimes x}$.

The category $\DS$ is isomorphic to the category of non-commutative sets, $\fas$. We can therefore consider a morphism in $\EDS$ to be a surjection of finite sets, that is a morphism in $\Omega$, with the extra structure of a total ordering on the preimage of each singleton in the codomain.

\begin{defn}
Let $U\colon \EDS\rightarrow \Omega$ denote the forgetful functor which is the identity on objects and sends a morphism of $\EDS$ to the underlying surjection of sets.
\end{defn}

\begin{defn}
Let 
\[\Phi_n\colon\ol{NCS_n(I)}\rightarrow NC\G_n(I,k)\]
be the map of $k$-modules determined by
\[ \left[ (f_n,\dotsc , f_1),(y_1\otimes\cdots \otimes y_{x})\right]\mapsto \left[\left[ U(f_n)\mid \cdots \mid U(f_1)\right] \otimes (y_1\otimes\cdots \otimes y_{x})\otimes 1_k\right].\]
\end{defn}

The proof that the maps $\Phi_n$ assemble into a map of chain complexes can be found in \cite[Appendix D]{DMG}. Since both chain complexes arise from simplicial $k$-modules the proof consists of a standard but time-consuming check that the maps $\Phi_n$ are compatible with the face maps.

\subsection{Main result}
\begin{thm}
Let $A$ be an augmented, commutative $k$-algebra with augmentation ideal $I$. There is a surjective map of chain complexes
\[\Phi\circ q\colon NCS_{\star}(I)\rightarrow NC\G_{\star}(I,k).\]
\end{thm}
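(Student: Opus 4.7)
The plan is to dispatch the two assertions in the statement separately: first that $\Phi\circ q$ is a chain map, and second that it is surjective in each degree.

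The chain-map property is essentially formal. The quotient map $q\colon NCS_{\star}(I)\to\overline{NCS_{\star}(I)}$ is a chain map because it is the canonical projection onto a quotient by a subcomplex, and $\Phi$ has been established to be a chain map (with the routine but lengthy compatibility check for face maps referred to \cite[Appendix D]{DMG}). Composites of chain maps are chain maps, so $\Phi\circ q$ is one.

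For surjectivity, I would work one degree at a time and produce an explicit preimage for each $k$-module generator. Take a generator
\[
\left[\left[f_n\mid\cdots\mid f_1\right]\otimes(y_1\otimes\cdots\otimes y_x)\otimes 1_k\right]
\]
of $NC\G_n(I,k)$, with $f_i$ a non-identity morphism of $\Omega$ in a composable string $\ul{x}\xra{f_1}\cdots\xra{f_n}\ul{1}$. The key observation is that the forgetful functor $U\colon\EDS\to\Omega$ is surjective on morphisms: any surjection $f\colon\ul{p}\to\ul{q}$ of $\Omega$ lifts to a morphism of $\EDS$ by equipping each fibre $f^{-1}(j)$ with an arbitrary total order, and such a lift is a non-identity morphism of $\EDS$ whenever $f$ is a non-identity morphism of $\Omega$, since $U$ sends identities to identities.

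Choosing any such lift $\widetilde{f_i}$ of each $f_i$, one obtains a string $(\widetilde{f_n},\dotsc,\widetilde{f_1})\in N_n(\EDS)$ consisting of non-identity morphisms and ending at $\ul{1}$. The element
\[
\left[(\widetilde{f_n},\dotsc,\widetilde{f_1}),(y_1\otimes\cdots\otimes y_x)\right]\in NCS_n(I)
\]
has final codomain $\ul{1}$, so it does not lie in the subcomplex $NCS_n^1(I)$ of Definition \ref{quot-cpx-defn}. Its image under $q$ is the corresponding class in $\overline{NCS_n(I)}$, and applying $\Phi$ recovers the original generator of $NC\G_n(I,k)$ by construction of $\Phi$. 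Hence every generator, and thus every element, is hit.

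I do not anticipate a genuine obstacle in this argument; once the definitions are unpacked the content is routine. The only point worth flagging is that surjectivity of $U$ on morphisms is built into the non-commutative sets description of $\EDS$, in which a morphism is precisely a surjection of finite sets together with a total ordering on each fibre. Different orderings provide different preimages, but for our purposes any choice suffices.
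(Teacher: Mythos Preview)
Your proposal is correct and follows essentially the same approach as the paper: both arguments establish surjectivity by lifting a string of surjections in $\Omega$ to a string in $\EDS$ via a choice of total orderings on the fibres (the paper specifies the canonical orderings, while you allow arbitrary ones, which is equally valid). Your explicit remark that $\Phi\circ q$ is a chain map, and that a non-identity lift in $\EDS$ of a non-identity morphism in $\Omega$ remains non-identity, makes your write-up slightly more thorough than the paper's, but the content is the same.
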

\begin{proof}
Recall from Definition \ref{quot-cpx-defn} that $q\colon NCS_{\star}(I)\rightarrow \ol{NCS_{\star}(I)}$ is a quotient map and is therefore surjective.

A generator
\[\left[\left[f_n\mid f_{n-1}\mid \cdots \mid f_1\right] \otimes (y_1\otimes\cdots \otimes y_{x})\otimes 1_k\right]\]
of $NC\G_{n}(I,k)$ is the image of 
\[ \left[ (f_n,\dotsc ,f_1),(y_1\otimes\cdots \otimes y_{x})\right] \]
in $\ol{NCS_{n}(I)}$ where we take the total orderings on the preimages of each $f_i$ to be the canonical ones. Hence $\Phi$ is also a surjective map.
\end{proof}

\begin{cor}
\label{LES-cor}
Let $A$ be an augmented, commutative $k$-algebra with augmentation ideal $I$. There is a short exact sequence of chain complexes
\[0\rightarrow \mr{Ker}(\Phi\circ q)\rightarrow NCS_{\star}(I)\xrightarrow{\Phi\circ q} NC\G_{\star}(I,k)\rightarrow 0,\]
which gives rise to the long exact sequence

\begin{center}
\begin{tikzcd}
  \cdots \rar   & H_n(\mr{Ker}(\Phi\circ q)\rar &  \widetilde{HS}_n\left(A\right) \rar & H\G_n(I,k)
          \ar[out=0, in=180, looseness=2, overlay]{dll}   & \\
        &  H_{n-1}(\mr{Ker}(\Phi\circ q)) \rar & \cdots \rar & H\G_1(I,k) \ar[out=0, in=180, looseness=2, overlay]{dll} & \\ & H_0(\mr{Ker}(\Phi\circ q)) \rar & \widetilde{HS}_0\left(A\right) \rar & H\G_0(I,k) \rar & 0
         \end{tikzcd}
\end{center} 
connecting the reduced symmetric homology of $A$ with a direct summand of the $\G$-homology of $A$. \QEDA
\end{cor}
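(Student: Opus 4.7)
The proof is essentially a formal packaging of the previous theorem together with standard homological algebra, so the plan is short. The main work has already been done in constructing $\Phi \circ q$ and showing it is surjective; what remains is to extract the long exact sequence and identify its terms with the named homology theories.

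First, I would observe that since $\Phi \circ q$ is a chain map, $\mr{Ker}(\Phi \circ q)$ is automatically a subcomplex of $NCS_{\star}(I)$. Combined with the surjectivity from the preceding theorem, this yields the displayed short exact sequence of chain complexes. Applying the standard zigzag (snake) lemma for a short exact sequence of chain complexes produces a long exact sequence in homology of exactly the displayed shape, terminating on the right with $H\G_0(I,k) \to 0$ since all three complexes are non-negatively graded.

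Second, I would identify the middle and right-hand homology terms. Both $NCS_{\star}(I)$ and $NC\G_{\star}(I,k)$ are the normalized chain complexes associated to simplicial $k$-modules (namely $CS_{\star}(I)$ of Definition \ref{reduced-symm} and $C\G_{\star}(I,k)$ of Definition \ref{gamma-aug-cpx}). By the remark following Definition \ref{complexes}, the normalized complex is the Moore subcomplex and hence is quasi-isomorphic to the unnormalized associated chain complex. Thus $H_n(NCS_{\star}(I)) \cong \widetilde{HS}_n(A)$ and $H_n(NC\G_{\star}(I,k)) \cong H\G_n(I,k)$. Finally, the corollary immediately following Theorem \ref{prune-split} records that $H\G_n(I,k)$ is a direct summand of $H\G_n(A,k)$, which justifies the final clause of the statement about connecting to a direct summand of the $\G$-homology of $A$.

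There is no real obstacle: every ingredient has already been developed in the excerpt, and the only care required is to verify that the right-hand end of the long exact sequence is indeed as drawn, which follows because $NCS_{\star}(I)$ and $NC\G_{\star}(I,k)$ both vanish in negative degrees.
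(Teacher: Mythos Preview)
Your proposal is correct and matches the paper's approach: the paper states the corollary with no proof (marked by \QEDA), treating it as an immediate consequence of the surjectivity theorem together with the standard long exact sequence of a short exact sequence of complexes. Your added explanation of why the normalized complexes compute $\widetilde{HS}_\star(A)$ and $H\G_\star(I,k)$, and your pointer to the splitting corollary after Theorem~\ref{prune-split} for the ``direct summand'' clause, are exactly the justifications the paper leaves implicit.
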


\begin{thm}
\label{long-exact-thm}
Let $k\supseteq\mathbb{Q}$. Let $A$ be a flat, augmented, commutative $k$-algebra. There is a long exact sequence
\begin{center}
\begin{tikzcd}
  \cdots \rar   & H_n(\mr{Ker}(\Phi\circ q)\rar &  \widetilde{HS}_n\left(A\right) \rar & H\G_n\left(A,k\right)
          \ar[out=0, in=180, looseness=2, overlay]{dll}   & \\
        &  H_{n-1}(\mr{Ker}(\Phi\circ q)) \rar & \cdots \rar & H\G_1\left(A,k\right) \ar[out=0, in=180, looseness=2, overlay]{dll} & \\ & H_0(\mr{Ker}(\Phi\circ q)) \rar & \widetilde{HS}_0\left(A\right) \rar & H\G_0\left(A,k\right) \rar & 0
         \end{tikzcd}
\end{center} 
connecting the reduced symmetric homology of $A$ with the $\G$-homology of $A$ with coefficients in $k$. 
\end{thm}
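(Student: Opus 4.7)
The plan is to combine the two main inputs already in place: the long exact sequence of Corollary \ref{LES-cor}, which involves the terms $H\G_n(I,k)$, and the isomorphism of Theorem \ref{gamma-iso-thm}, which identifies $H\G_\star(I, M)$ with $H\G_\star(A, M)$ for a symmetric $A$-bimodule $M$ whenever $k \supseteq \mathbb{Q}$ and $A$ is flat, augmented and commutative.

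First I would verify that the hypotheses of Theorem \ref{gamma-iso-thm} are satisfied with coefficients $M = k$. The augmentation $\varepsilon \colon A \rightarrow k$ equips $k$ with an $A$-bimodule structure via $a \cdot \lambda = \lambda \cdot a = \varepsilon(a)\lambda$, and commutativity of $k$ makes this bimodule symmetric. Since $k \supseteq \mathbb{Q}$ and $A$ is flat, augmented and commutative by hypothesis, Theorem \ref{gamma-iso-thm} furnishes an isomorphism of graded $k$-modules $\alpha_n \colon H\G_n(I,k) \xrightarrow{\cong} H\G_n(A,k)$ for every $n \geqslant 0$.

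Next I would splice this isomorphism into the long exact sequence of Corollary \ref{LES-cor}. Composing each map into $H\G_n(I,k)$ with $\alpha_n$, and each map out of $H\G_n(I,k)$ with $\alpha_n^{-1}$, produces a new sequence in which every $H\G_n(I,k)$ is replaced by $H\G_n(A,k)$. Because $\alpha_n$ is an isomorphism, exactness at each term is preserved, and the resulting sequence is precisely the long exact sequence claimed in the statement.

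There is no substantive obstacle here: the two deep ingredients, namely the construction of the surjection $\Phi \circ q$ leading to Corollary \ref{LES-cor} and the identification $H\G_\star(I,M) \cong H\G_\star(A,M)$ of Theorem \ref{gamma-iso-thm}, have already been established. The content of the theorem is their formal combination under the extra flatness and rationality assumptions needed to invoke Theorem \ref{gamma-iso-thm}.
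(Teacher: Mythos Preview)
Your proposal is correct and follows precisely the same approach as the paper: the paper's proof is a single sentence invoking Corollary \ref{LES-cor} and Theorem \ref{gamma-iso-thm}, and your write-up simply spells out the verification that $M=k$ satisfies the hypotheses and that the isomorphism can be spliced into the long exact sequence.
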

\begin{proof}
The theorem follows from Corollary \ref{LES-cor} and Theorem \ref{gamma-iso-thm}.
\end{proof}

\section*{Acknowledgements}
I would like to thank Sarah Whitehouse, my Ph.D. supervisor, for all her guidance and encouragement. I am grateful to James Brotherston for his proof-reading and helpful suggestions.

\bibliographystyle{alpha}
\bibliography{comp-refs}

\end{document}